\documentclass[12pt]{amsart}

\usepackage{amsthm}
\usepackage{graphicx}
\usepackage{amssymb}

\usepackage[font=small,labelfont=bf]{caption}
\usepackage{multirow}

\newcommand{\C}{\mathcal{C}}

\newcommand{\N}{\mathbb{N}}

\renewcommand{\epsilon}{\varepsilon}
\newcommand{\mys}{\ensuremath{|}}
\newcommand{\myd}{\ensuremath{||}}

\begin{document}

\newtheorem{theorem}{Theorem}[section]
\newtheorem{prop}[theorem]{Proposition}
\newtheorem{corollary}[theorem]{Corollary}


\title[Quadrangulated Immersions of Cubic Graphs]{Which Cubic Graphs have Quadrangulated Spherical Immersions?}
\author[L. Abrams]{Lowell Abrams}
\curraddr[L. Abrams]{University Writing Program and Department of Mathematics\\The George Washington University\\Washington, D.C. USA}
\email{labrams@gwu.edu}

\author[Y. Berman]{Yosef Berman}
\curraddr[Y. Berman]{New York, NY USA}
\email{berman.yosef@gmail.com}

\author[V. Faber]{Vance Faber}
\curraddr[V. Faber]{Center for Computing Sciences\\ Bowie, MD USA}
\email{vance.faber@gmail.com}

\author[M. Murphy]{Michael Murphy}
\curraddr[M. Murphy]{Venice, CA USA}
\email{piggymurph@gmail.com}

\date{\today}

\begin{abstract}We consider spherical quadrangulations --  spherical embeddings of multigraphs, possibly with loops, so that every face has boundary walk of length 4 -- in which all vertices have degree 3 or 4. Interpreting each degree 4 vertex as a crossing, these embeddings can also be thought of as transversal immersions of cubic graphs which we refer to as the {\it extracted graphs}. We also consider quadrangulations of the disk in which interior vertices have degree 3 or 4 and boundary vertices have degree 2 or 3. First, we classify all such quadrangulations of the disk. Then, we  provide four methods for constructing spherical quadrangulations, two of which use quadrangulations of the disk as input. Two of these methods provide one-parameter families of quadrangulations, for which we prove that the sequence of isomorphism types of extracted graphs is periodic. We close with a description of computer computations which yielded spherical quadrangulations for all but three cubic multigraphs on eight vertices. 
\end{abstract}

\subjclass[2010]{Primary 05C10; Secondary 05C62}
\keywords{spherical embedding, immersion, quadrangulation}

\maketitle

\section{Introduction}

This paper concerns spherical quadrangulations --  spherical embeddings of multigraphs, possibly with loops, so that every face has boundary walk of length 4 -- in which all vertices have degree 3 or 4.  (In this paper, the term ``graph'' allows for multiple edges and loops.)  In fact, a simple Euler characteristic argument shows that there must be exactly 8 cubic vertices. We view such quadrangulations as immersions of cubic graphs with transverse crossings, and study their properties and various construction methods, while considering the question of which cubic graphs can be so realized. As a first set of examples, Figure \ref{fig:connected} shows quadrangulated spherical immersions of the five simple connected graphs on 8 vertices. 

\begin{figure}
\begin{center}
\includegraphics[width=.97\textwidth]{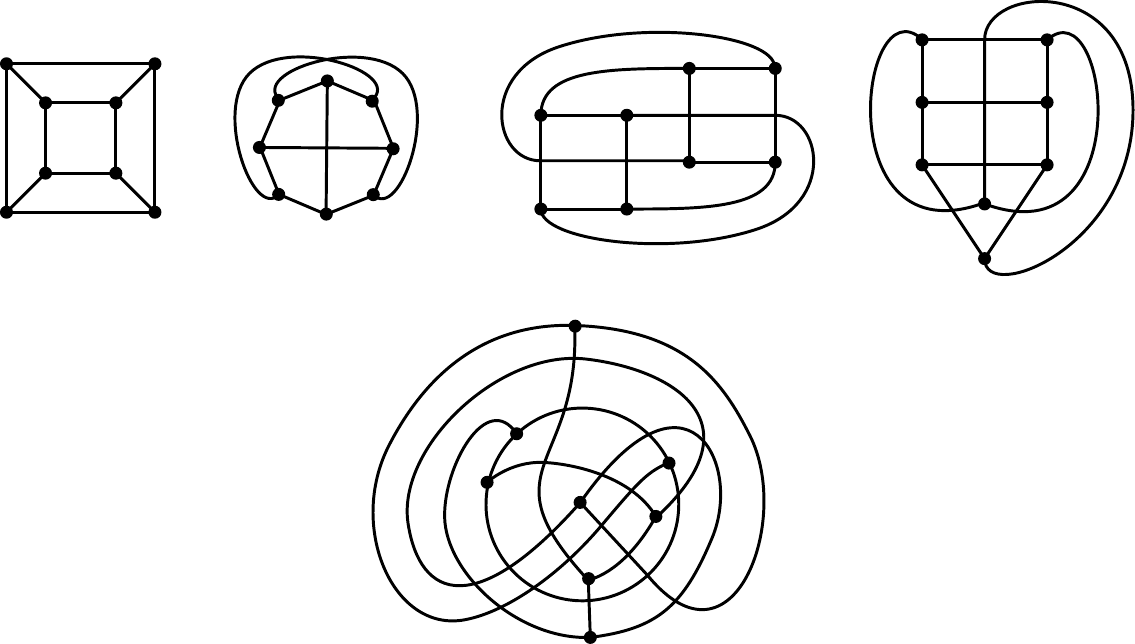}
\end{center}
\caption{Quadrangulated immersions of the five simple connected cubic graphs on eight vertices.}
\label{fig:connected}
\end{figure}

The work here is an outgrowth and development of preliminary ideas of Abrams and Slilaty \cite{AbramsSlilaty:Grids}. Methods for constructing spherical quadrangulations with all vertices of degree 3 or 4 are also developed by Hasheminezhad and McKay  \cite{HasheminezhadMcKay}, but they do not interpret such embeddings as immersions, and the interaction of their methods with the notion of immersion is not immediately clear.

Quadrangulations are of interest because they concentrate all ``curvature'' at vertices, thereby giving a fair bit of control over their structure. For instance, a very explicit characterization of quadrangulations of the [flat] torus and [flat] Klein bottle in which every vertex has degree 4 (along with the additional property that the four faces around each vertex along with their boundaries form a $2\times
2$ square grid) was initially given by Thomassen \cite{Thomassen91}; a slightly different formulation is given by
M\'arquez, de Mier, Noy, and Revuelta \cite{MarquezDeMierNoy03}.  In any graph embedding, if ``most" of the vertices are of degree 4, then $G$ has ``large" areas that are annular or appear as the standard, geometrically-flat, infinite $\{4,4\}$-planar lattice. In contrast to this, vertices that are not of degree 4 create the curvature necessary for a quadrangulation to be in a surface other than the torus or Klein bottle.

Although our objects of interest are immersions of graphs in the sphere, it is helpful to also think in terms of immersions of one graph in another. Let $P(G)$ denote the set of paths in $G$. Formally, an {\it immersion} of a graph $H$ in a graph $G$ is a function $\beta\colon V(H) \sqcup E(H) \to V(G) \sqcup P(G)$ for which 
\begin{itemize}
\item $\beta(V(H)) \subseteq V(G)$;
\item $\beta(E(H)) \subseteq P(G)$;
\item if $e=uv\in E(H)$ then $\beta(e)$ is a $\beta(u),\beta(v)$-path in $G$;
\item if $e,e'\in E(H)$ are distinct edges then the paths $\beta(e)$ and $\beta(e')$ are edge-disjoint.
\end{itemize}
Our case actually demands more: an immersion of $H$ in $G$ is a {\it strong immersion} if for any edge $uv\in E(H)$, the path $\beta(uv)$ intersects $\beta(V(H))$ only at the ends of $\beta(uv)$. Thus, our objects of interest are strong graph immersions $\beta$ of a cubic graph $H$ on 8 vertices into a spherical quadrangulation $G$ for which $\beta\mid_{V(H)}$ is a bijection.

An additional restriction we place on our immersions is that crossings must be transversal. This can be described combinatorially using language of Malkevitch, who defines a notion of {\it coded path} which, when approaching a given vertex, indicates the number of edges to rotate by before continuing on \cite{Malkevitch70}. We are considering paths which begin and end at cubic vertices, have all interior vertices of degree 4, and have the code ``$(2)$'', {i.e.}, the continuation through any degree 4 vertex $v$ is never to an edge which is adjacent to the entry-edge in the rotational order at $v$.

Section \ref{sec:embeddings and immersions} presents the basic notions of embeddings and immersions we need, and in particular highlights the notion of {\it transversal extension of a path}. This is followed in Section \ref{sec:cubicquads} by the definition of our main object of study, which  as we have outlined above is a cellular spherical embedding such that all faces are quadrangles and all vertices have degree 3 or 4; for brevity we will refer to these simply as {\it quadrangulated immersions}.  Our first major theorem is Theorem \ref{thm:quadrangulated_disks}, which classifies all quadrangulated immersions in a disk; these are the building blocks for some of the constructions we provide. 

Section \ref{sec:constructions} presents four techniques for constructing quadrangulated immersions, along with appropriate verifications of correctness:
\begin{itemize}
\item[5.1.] The two disks construction, which produces quadrangulated immersions by attaching two quadrangulated disks along their boundary;

\item[5.2.] The radial construction, which makes use of the fact that the radial graph of a quadrangulated immersions is itself a quadrangulated immersion;

\item[5.3.] The spiral construction, which begins with a quadrangulated disk and builds the remainder of the quadrangulated immersions using a specific ``spiraling'' process.  Proposition \ref{pr:spiralperiodicity} identifies a particular periodicity property that this construction enjoys;

\item[5.4.] The cable construction, which is similar in flavor to the spiral construction, but works by modifying an existing quadrangulated immersions. Again, we have a periodicity result, given in Theorem \ref{thm:cable}.
\end{itemize}

In Section \ref{sec:computations} we discuss computational efforts to produce quadrangulated spherical immersions of all cubic multigraphs on 8 vertices. We found  immersions of all 69 non-connected multigraphs on 8 vertices, and of the 71 connected multigraphs we found  immersions of all but the three shown in Figure \ref{fig:outlaws}. We comjecture that, in fact, these three graphs have no quadrangulated immersion.

\begin{figure}
\begin{center}
\includegraphics[width=.8\textwidth]{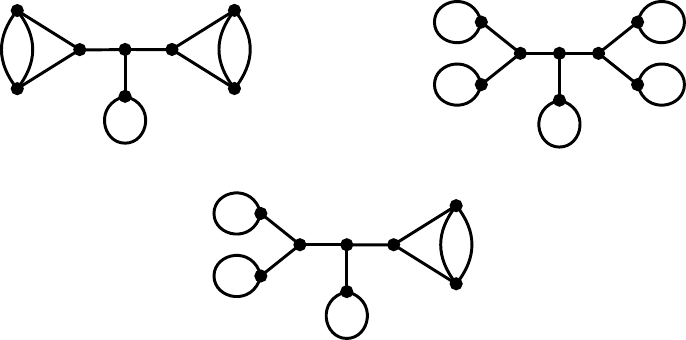}
\end{center}
\caption{The three cubic graphs on 8 vertices for which we do not have a quadrangulated immersion. In the notation from Section \ref{subsec:cubic_multigraphs}, these are 3a\mys 2b\mys 3a, 3a\mys 2b\mys 3b, 3b\mys 2b\mys 3b .}
\label{fig:outlaws}
\end{figure}

\section{Embeddings, Transversals, and Immersions} \label{sec:embeddings and immersions}

We refer to \cite{GrossTucker} for the basics of graph embeddings not covered here. A graph $G$ embedded in a topological surface $S$ is said to be {\it cellularly embedded} if the complement of $G$ in $S$ is a disjoint union of open topological disks; these are the {\it faces} of the embedding. A cellular graph embedding in an orientable surface corresponds to a {\it rotation scheme}, {\it i.e.}, an assignment to each vertex of a cyclic ordering of the edges incident to that vertex. 

If graph $G$ is embedded in a surface $S$ and $P$ is a path in $G$ with vertices $v_0, v_1, \ldots, v_n$, in that order, then we say $P$ is a {\it transverse path of $G$ in $S$} if, for $i=1,2, \ldots, n-1$, we have $\deg(v_i) = 4$  and the edges of $P$ incident to $v_i$ are non-consecutive in the rotation scheme. A transverse path $P$ is {\it complete} if neither of its endpoints has degree 4 in $G$. A {\it transverse closed walk of $G$ in $S$} (or just {\it transversal of $G$ in $S$ }) is a closed walk $W$ of $G$ such that for each vertex $v$ in $W$ we have $\deg(v)=4$ and if $w,w'$ are consecutive edges of $W$ incident to $v$ then $w,w'$ are non-consecutive in the rotation scheme.

Given a cellular embedding of a graph $G$ for which all vertices of even degree have degree 4, we may construct another graph $\epsilon(G)$ we call the {\it extraction from $G$} as follows. The vertices of $\epsilon(G)$ are the odd-degree vertices of $G$, and given two odd-degree vertices $u,v$ of $G$ there is an edge $uv$ in $\epsilon(G)$ exactly when there is a transverse path in the embedding of $G$ in $S$ from $u$ to $v$. Note that embedding of $G$ in $S$ corresponds to an immersion of $\epsilon(G)$ in $S$, and the vertices of degree 4 in $G$ correspond to transverse crossings in the immersion.

Consider a cellular embedding of a graph $G$. Suppose that $f$ is a 5-sided face in this embedding having boundary vertices $v_0, v_1, \ldots, v_4$ (indexed modulo 5), in that cyclic order, and that $P$ is a path in $G$ whose last vertex is $v_i$ for some $i$, but whose last edge is not on $\partial f$. A { \it single-edge transverse extension of $P$} is the path obtained by subdividing the edge $v_{i+2}v_{i-2}$ with a new vertex $u$, then introducing a new edge  $v_iu$ to the embedding and adjoining it to $P$; see Figure \ref{fig:transextension}. Note that the face $f$ has now been subdivided into two quadrangles, and that if the face other than $f$ incident to $v_{i-2}v_{i+2}$ had originally been a quadrangle it is now a pentagon. A {\it transverse extension of $P$} is the path obtained by iteratively performing single-edge transverse extensions. Intuitively, one may think of a transverse extension as being obtained by extending $P$ ``straight across'' a sequence of quadrangles. 
\begin{figure}
\begin{center}
\includegraphics[width=.8\textwidth]{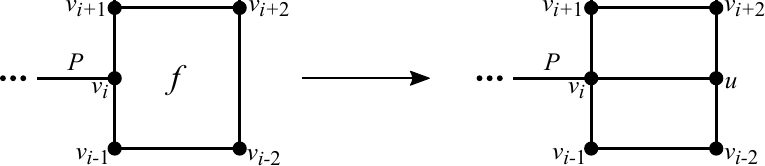}
\end{center}
\caption{A single-edge transverse extension.}
\label{fig:transextension}
\end{figure}

\section{Quadrangulated Immersions} \label{sec:cubicquads}

A {\it quadrangulated immersion} is a cellular embedding of a graph $G$ in the sphere $S^2$ such that each vertex of $G$ either has degree 3 or degree 4, and all faces of the embedding are quadrangles. Note that this can also be thought of as an immersion of the cubic graph $\epsilon(G)$. 

For a  quadrangulated immersion $G$, let $\nu, \mu$, and $\lambda$ denote the quantity of vertices, edges, and faces, respectively. Also, let $\nu_i$ denote the quantity of vertices  of degree $i$. 
\begin{prop}
In any quadrangulated immersion, $\nu_3 =  8$, and $\lambda= 6+\nu_4$. 
\end{prop}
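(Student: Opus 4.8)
The plan is to use a standard Euler characteristic / double-counting argument. First I would invoke the fact that a cubic quadrangulation is a cellular embedding of $G$ in $S^2$, so Euler's formula gives $\nu - \mu + \lambda = 2$. Next, since every face is a quadrangle, counting incidences between faces and edges (each edge lies on exactly two faces, each face has four edges on its boundary walk) yields $2\mu = 4\lambda$, i.e.\ $\mu = 2\lambda$.

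The second ingredient is a degree count. Every vertex has degree $3$ or $4$, so $\nu = \nu_3 + \nu_4$ and the handshake lemma gives $2\mu = \sum_v \deg(v) = 3\nu_3 + 4\nu_4$. Substituting $\nu = \nu_3 + \nu_4$, $\mu = 2\lambda$, and $\lambda = \mu/2$ into Euler's formula, I would eliminate $\mu$ and $\lambda$: from $\nu - \mu + \mu/2 = 2$ we get $\nu - \mu/2 = 2$, hence $2\nu - \mu = 4$. Combining with $2\mu = 3\nu_3 + 4\nu_4$ and $\nu = \nu_3 + \nu_4$, a short linear elimination should collapse everything to $\nu_3 = 8$. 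Concretely, $2\mu = 3\nu_3 + 4\nu_4 = 4\nu - \nu_3 = 4(\nu_3+\nu_4) - \nu_3$, and also $2\mu = 4\nu - 8$ from $2\nu - \mu = 4$; equating gives $4\nu - \nu_3 = 4\nu - 8$, so $\nu_3 = 8$. Then $\lambda = 6 + \nu_4$ follows by back-substitution: $\mu = 2\nu - 4 = 2(\nu_3+\nu_4) - 4 = 2\nu_4 + 12$, and $\lambda = \mu/2 = \nu_4 + 6$.

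There is essentially no obstacle here — the whole argument is three linear relations (Euler, face-edge incidence, handshake) in the unknowns $\nu_3, \nu_4, \mu, \lambda$, and the degree restriction to $\{3,4\}$ is exactly what makes the system determine $\nu_3$ and pin $\lambda$ to $\nu_4$. The only thing to be careful about is making sure the face-edge count $2\mu = 4\lambda$ is justified: this uses that the embedding is cellular (so every face is an open disk with a well-defined boundary walk of length $4$) and that the count is of edge-side incidences, so a bridge-type edge appearing twice on one face boundary is still counted with multiplicity two — in a cubic quadrangulation on $S^2$ one could also note $G$ is bridgeless, but the incidence count holds regardless. I would state the argument in two or three sentences and leave the linear algebra inline.
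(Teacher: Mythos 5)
Your argument is correct and is essentially the same as the paper's: both use the three linear relations (Euler's formula, the face--edge incidence count $2\mu=4\lambda$, and the handshake identity $2\mu=3\nu_3+4\nu_4$) and eliminate to obtain $\nu_3=8$ and $\lambda=6+\nu_4$. The only difference is the order of elimination, which is immaterial.
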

\begin{proof}
We have $2\mu = 3\nu_3 + 4\nu_4$ and $2\mu = 4\lambda$. Taken together with the Euler formula $(\nu_3+\nu_4)-\mu+\lambda=2$, we get
\[
2 = (\nu_3+\nu_4) - \left(\frac{3}{2}\nu_3 + 2\nu_4\right) + \left(\frac{3}{4}\nu_3+\nu_4\right) = \frac{1}{4}\nu_3,
\]
which give $\nu_3 = 8$. We also have $4\lambda = 3\nu_3 + 4\nu_4 = 24 + 4\nu_4$, so $\lambda = 6+\nu_4$. 
\end{proof}

It is well known that any graph embedding in the sphere having all facial boundaries of even length, such as a quadrangulated immersion, is a bipartite graph. If $A,B \subset V(G)$ give a bipartition of $G$, then we write $\nu_{A,i} $ (or $\nu_{B,i}$) for the quantity of vertices in $A$ (or $B$) having degree $i$. 
\begin{prop} \label{pr:bipart_vcounts}
Let $G$ in $S^2$ be a quadrangulated immersion with bipartition $A\cup B$. Then
\[
3\nu_{A,3} + 2\left(\nu_{A,4}-\nu_{B,4}\right) = 12,
\]
and in particular, $\nu_{A,3} = 0 \mod 2$ and $\nu_{A,4} = \nu_{B,4}  \mod 3$. 
\end{prop}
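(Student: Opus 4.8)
The plan is to exploit the bipartiteness of $G$, which was established just above, by double-counting edges through the bipartition. Since $G$ is bipartite with parts $A$ and $B$, every edge has exactly one endpoint in $A$, so summing vertex degrees over $A$ counts each edge exactly once: $\sum_{v\in A}\deg(v)=\mu$, and likewise $\sum_{v\in B}\deg(v)=\mu$. Writing these sums in terms of the degree counts gives $3\nu_{A,3}+4\nu_{A,4}=\mu=3\nu_{B,3}+4\nu_{B,4}$.

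Next I would bring in the two facts already on hand, namely $\nu_{A,3}+\nu_{B,3}=\nu_3=8$ from the previous proposition. Subtracting the two expressions for $\mu$ yields $3(\nu_{A,3}-\nu_{B,3})=4(\nu_{B,4}-\nu_{A,4})$, and substituting $\nu_{B,3}=8-\nu_{A,3}$ turns this into $6\nu_{A,3}-24=-4(\nu_{A,4}-\nu_{B,4})$; dividing by $2$ and rearranging produces exactly $3\nu_{A,3}+2(\nu_{A,4}-\nu_{B,4})=12$.

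The two congruences then drop out of this identity. Reducing mod $2$ gives $3\nu_{A,3}\equiv 0$, hence $\nu_{A,3}\equiv 0\pmod 2$. Reducing mod $3$ gives $2(\nu_{A,4}-\nu_{B,4})\equiv 0\pmod 3$, and since $2$ is invertible modulo $3$ this forces $\nu_{A,4}\equiv\nu_{B,4}\pmod 3$.

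I do not expect a genuine obstacle here; the only step that merits a moment's care is the claim that in a bipartite multigraph — even one with parallel edges — each edge still contributes to exactly one vertex-degree on each side, so the edge-counting identity holds verbatim (bipartiteness rules out loops, which are the only thing that could break it). An alternative route to the same identity is to count vertex–face incidences on the $A$-side, using that each quadrangular face alternates between $A$ and $B$ and so meets $A$ in exactly two corners; but the degree-sum argument above seems cleaner and avoids worrying about faces whose boundary walk is degenerate.
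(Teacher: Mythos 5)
Your proof is correct and is essentially the paper's own argument: both count edges by summing vertex degrees over each block of the bipartition to get $3\nu_{A,3}+4\nu_{A,4}=3\nu_{B,3}+4\nu_{B,4}$, substitute $\nu_{B,3}=8-\nu_{A,3}$, and reduce the resulting identity modulo $2$ and $3$. The remark about loops being excluded by bipartiteness is a fine (if unneeded) extra check.
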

\begin{proof}
Counting edges by vertex degrees, we obtain
\[
3\nu_{A,3} + 4\nu_{A,4} = 3\nu_{B,3}+4\nu_{B,4} = 3\left(8 - \nu_{A,3}\right) + 4\nu_{B,4},
\]
which simplifies to 
\[
3\nu_{A,3} + 2\left(\nu_{A,4}-\nu_{B,4}\right) = 12,
\]
from which the additional relations follow by reducing modulo 2 and 3, respectively. 
\end{proof}

\begin{corollary} \label{cor:6and2}
If a quadrangulated immersion has $\nu = 1 \mod 2$ then it has 6 vertices of degree 3 in one class of the bipartition and 2 in the other. 
\end{corollary}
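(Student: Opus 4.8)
The plan is to derive the corollary directly from Proposition~\ref{pr:bipart_vcounts} by a short parity and counting argument. First I would observe that $\nu = \nu_3 + \nu_4 = 8 + \nu_4$, so $\nu$ being odd is equivalent to $\nu_4$ being odd. Write $\nu_4 = \nu_{A,4} + \nu_{B,4}$; since this sum is odd, exactly one of $\nu_{A,4}$, $\nu_{B,4}$ is odd, i.e.\ $\nu_{A,4}$ and $\nu_{B,4}$ have opposite parities, so $\nu_{A,4} - \nu_{B,4}$ is odd.

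Next I would feed this into the identity $3\nu_{A,3} + 2(\nu_{A,4} - \nu_{B,4}) = 12$ from Proposition~\ref{pr:bipart_vcounts}. Since $\nu_{A,4} - \nu_{B,4}$ is odd, $2(\nu_{A,4} - \nu_{B,4}) \equiv 2 \pmod 4$, hence $3\nu_{A,3} \equiv 2 \pmod 4$, which forces $\nu_{A,3} \equiv 2 \pmod 4$ (as $3$ is invertible mod $4$ and $3 \cdot 2 = 6 \equiv 2$). Because $0 \le \nu_{A,3} \le 8$, this leaves only $\nu_{A,3} = 2$ or $\nu_{A,3} = 6$. In either case $\nu_{B,3} = 8 - \nu_{A,3}$ is the other of $6$ and $2$, which is exactly the assertion that one block has $6$ cubic vertices and the other has $2$.

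I do not anticipate a genuine obstacle here — the statement is a clean corollary of the proposition, and the only thing to be careful about is the bookkeeping: confirming the equivalence of ``$\nu$ odd'' with ``$\nu_{A,4} - \nu_{B,4}$ odd'' via $\nu_4 = \nu - 8$, and then checking that the congruence $\nu_{A,3} \equiv 2 \pmod 4$ together with the hard bound $\nu_{A,3} \in \{0, 1, \ldots, 8\}$ (itself a consequence of $\nu_{3} = 8$) pins down the two values. If anything needs emphasis, it is that we are using $\nu_3 = 8$ from the first proposition of the section to bound $\nu_{A,3}$; without that bound the congruence alone would not suffice.
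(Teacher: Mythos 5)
Your proof is correct and rests on the same ingredients as the paper's: the identity $3\nu_{A,3} + 2(\nu_{A,4}-\nu_{B,4}) = 12$ from Proposition~\ref{pr:bipart_vcounts} together with $\nu_3 = 8$, turned into a parity statement that forces $\nu_{A,3}\equiv 2 \pmod 4$ exactly when $\nu$ is odd. The paper reaches the same conclusion by a WLOG case check over $\nu_{A,3}\in\{4,6,8\}$ rather than your direct mod-$4$ computation, but this is a cosmetic difference, not a different argument.
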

\begin{proof}
Assume, without loss of generality, that $\nu_{A,3} \geq \nu_{B,3}$. Since $\nu_{A,3} = 0 \mod 2$, we see that $\nu_{A,3} \in \{4,6,8\}$, and from  Proposition \ref{pr:bipart_vcounts} we obtain  
\[
\nu = 8 + \nu_{A,4} + \nu_{B,4} = 8 + \nu_{A,4} + \frac{3}{2}\nu_{A,3} + \nu_{A,4} - 6 = 2\nu_{A,4} + 2 +  \frac{3}{2}\nu_{A,3}.
\]
Since this is even when $\nu_{A,3} \in \{4,8\}$ and odd when $\nu_{A,3}=6$, we are done.
\end{proof}

Proposition \ref{pr:one_crossing} follows the theme of a proposition of Blind and Blind  \cite{BlindBlind94}. 
\begin{prop}\label{pr:one_crossing}There is no quadrangular immersion of a simple cubic graph having exactly one crossing, and there is only one having two crossings.
\end{prop}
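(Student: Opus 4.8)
The plan is to push the degree and Euler counts as far as they will go, and then read off the structure near the crossings. A quadrangular immersion of a cubic graph with $k$ crossings is exactly a cubic quadrangulation $G$ with $\nu_4 = k$, the immersed graph being $\epsilon(G)$, which by hypothesis is simple. From $2\mu = 3\nu_3 + 4\nu_4 = 24 + 4k$ we get $\mu = 12 + 2k$; and since $G$ is bipartite with blocks $A$, $B$, every edge is counted once in $\mu = 3\nu_{A,3} + 4\nu_{A,4}$.

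\emph{One crossing.} If $k = 1$ then $\mu = 14$, so $3\nu_{A,3} + 4\nu_{A,4} = 14$ with $\nu_{A,4} \in \{0,1\}$ (the single degree-$4$ vertex lies in $A$ or in $B$), forcing $\nu_{A,3} \in \{14/3,\,10/3\}$ --- not a nonnegative integer. Equivalently, this is precisely the failure of the divisibility in Proposition~\ref{pr:bipart_vcounts}. So no cubic quadrangulation at all has exactly one crossing, and in particular none whose extraction is a simple cubic graph.

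\emph{Two crossings.} If $k = 2$ then $\mu = 16$, and $3\nu_{A,3} + 4\nu_{A,4} = 16$ with $\nu_{A,4} \in \{0,1,2\}$ forces $\nu_{A,4} = 1$; hence $\nu_{A,4} = \nu_{B,4} = 1$ and $\nu_{A,3} = \nu_{B,3} = 4$, with $\nu = 10$, $\mu = 16$, $\lambda = 8$. Write $v_1 \in A$, $v_2 \in B$ for the two crossings. The core of the argument is to show that, \emph{provided} $v_1$ is not joined to $v_2$, $v_1$ has four distinct neighbours, and the four faces around $v_1$ are genuine $4$-cycles, the embedding is forced. Indeed, the neighbours of $v_1$ are then the four cubic vertices $b_1,\dots,b_4$ of $B$, in rotation order, the face in the gap between $v_1 b_i$ and $v_1 b_{i+1}$ is a $4$-cycle $v_1 b_i z_i b_{i+1}$ (indices mod $4$), and counting degrees at the $b_i$ and at the $z_i$ forces $z_1,\dots,z_4$ to be the four cubic vertices $a_1,\dots,a_4$ of $A$ with $a_i \sim b_i, b_{i+1}$, while the third edge at each cubic $a_i$ must go to $v_2$. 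Thus $G$ is the $8$-cycle $b_1 a_1 b_2 a_2 b_3 a_3 b_4 a_4$ together with $v_1$ joined to all the $b_i$ and $v_2$ joined to all the $a_i$; and the rotation at $v_2$ must list the $a_i$ so that consecutive entries are cyclically adjacent in index, which leaves only the cyclic order $(a_1,a_2,a_3,a_4)$ up to reflection, determining the remaining four faces and every remaining rotation. So in this ``generic'' situation there is a single quadrangulation. Reading off its transverse paths: the eight cubic--cubic edges of $G$ give the $8$-cycle $b_1 a_1 \cdots a_4 b_1$, while the two transverse paths through each crossing contribute the four ``diameters'' $a_1 a_3$, $a_2 a_4$, $b_1 b_3$, $b_2 b_4$; hence $\epsilon(G)$ is the $8$-cycle with its four diameters --- the Wagner graph $V_8$ --- which is simple. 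This accounts for one quadrangular immersion of a simple cubic graph with two crossings.

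\emph{The obstacle: degeneracies.} What remains is to show the excluded configurations contribute nothing. If $v_1$ has two parallel edges to a cubic vertex $b$, then those edges either bound a bigon face or the transversal they determine is a closed transverse walk through the degree-$3$ vertex $b$ --- both impossible; a degenerate face around $v_1$ reduces similarly to such a multi-edge. If $v_1$ and $v_2$ are joined by two parallel edges, one checks that all four faces at $v_1$ then also meet $v_2$, and tracking these faces forces two of them into the same corner of a cubic vertex --- again impossible. The remaining, and genuinely fiddly, case is $v_1 \sim v_2$ by a single edge $e_0$: examining the two faces along $e_0$ together with the four gap-faces at $v_1$ and $v_2$, every completion to a quadrangulation forces a fourth edge at some cubic vertex (or a repeated edge in $\epsilon(G)$), so this case yields nothing. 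I expect this last case to be the main obstacle; everything else is dictated by the degree and Euler counts together with the quadrangularity of the faces around the two crossings.
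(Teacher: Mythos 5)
Your counting for the one-crossing case is correct and in fact stronger than the statement requires: $3\nu_{A,3}+4\nu_{A,4}=14$ has no solution with $\nu_{A,4}\in\{0,1\}$, so no cubic quadrangulation at all has nine vertices, simple extraction or not. (The paper instead argues locally: simplicity forces the closed star of the crossing to be the nine-vertex configuration of Figure~\ref{fig:one_crossing} with distinct vertices, whose boundary $8$-cycle alternates blocks, and the two remaining edges would have to join same-block vertices, violating bipartiteness.) Your reduction of the two-crossing case to $\nu_{A,3}=\nu_{B,3}=4$, $\nu_{A,4}=\nu_{B,4}=1$ is also right, and the generic analysis does pin down the quadrangulation of Figure~\ref{fig:two_crossings} uniquely, though you should still justify that the $z_i$ are pairwise distinct ($z_i=z_{i+1}$ would place two quadrangles in the same corner of the degree-$3$ vertex $b_{i+1}$, and $z_i=z_{i+2}$ would create a second degree-$4$ vertex in $A$).

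The genuine gap is the elimination of the degenerate configurations, which you assert rather than prove. The dichotomy ``bigon face or closed transverse walk through $b$'' is not exhaustive: two parallel edges $v_1b$ that are consecutive at $v_1$ need not bound a single face, since the third edge at $b$ may lie between them, and a closed walk passing through a degree-$3$ vertex merely fails to be a transversal --- that is not by itself a contradiction. More seriously, the case $v_1\sim v_2$ is dispatched with ``one checks'' and your own admission that you expect it to be the main obstacle; it is a real case that the counting does not exclude, and it must be killed using the simplicity of $\epsilon(G)$. The paper handles all of these degeneracies at once by working outward from the fixed local picture of Figure~\ref{fig:one_crossing}: with two crossings there are exactly one unseen vertex and four unseen edges, any chord between boundary vertices is constrained by bipartiteness and multiplicity to a single option up to symmetry, and that option separates the remaining deficient corners so that no placement of the last vertex works --- forcing the tenth vertex to have degree $4$ and be joined to all four far corners. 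Reorganizing your case analysis around that one local configuration would let you close the adjacency and parallel-edge cases uniformly; as written, the proof is incomplete.
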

Another way to state Proposition \ref{pr:one_crossing} is that a quadrangulated immersion whose extracted graph is simple cannot have exactly 9 vertices, and there is exactly one such quadrangulated immersion having exactly 10 vertices. 
\begin{figure}
\begin{center}
\includegraphics{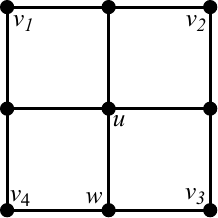}
\end{center}
\caption{The local configuration for a single degree-4 vertex.}
\label{fig:one_crossing}
\end{figure}
\begin{proof}
If a simple quadrangulated immersion $G$ has a single degree-4 vertex $u$, then by Corollary \ref{cor:6and2} $G$ has six vertices of degree 3 in one class of the bipartition and 2 in the other. Even with $u$ included in the second class, there would be 18 edges leaving the first class but only 10 leaving the second class, a contradiction since $G$ is simple.

\begin{figure}
\begin{center}
\includegraphics[height=2in]{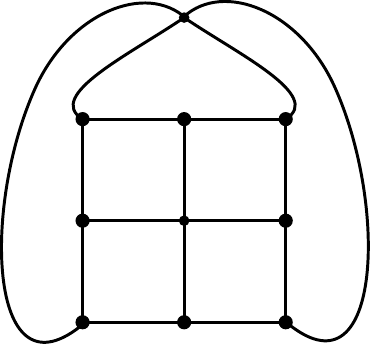}
\end{center}
\caption{The quadrangulated immersion with 10 vertices.}
\label{fig:two_crossings}
\end{figure}
Suppose now that $G$ has two degree-4 vertices, $u$ and $w$. As before, $u$ must have a neighborhood as shown in Figure \ref{fig:one_crossing}, with none of the boundary vertices identified. Thus, in this case,  Figure \ref{fig:one_crossing} shows all but one vertex of $G$ and all but four edges of $G$. If the unshown vertex has degree 3, then there is an edge connecting two of the boundary vertices. Up to dihedral symmetry, the only edge connecting two boundary vertices which creates no parallel edge and no odd cycle is $v_1w$. However, this edge cause the degree-2 vertices on the boundary to be in separate faces, implying that there is nowhere the remaining degree 3 vertex can be placed. It follows that the unshown vertex has degree 4, and the quadrangulated immersion is as shown in Figure \ref{fig:two_crossings}.
\end{proof}

\section{Quadrangulated Disks}

Given a disk $D$ write $\partial D$ to denote its boundary and $D^\circ$ to denote its interior. We say a connected graph $G$ is {\it properly embedded} in a disk $D$ if $\partial D$ is covered by a cycle of $G$. We say that $D$ is {\it quadrangulated by $G$} if $G$ is properly embedded in $D$ and every face of the embedding is a quadrangle. If $G$ is properly embedded in $D$, let $b_k$ denote the quantity of boundary vertices of degree $k$ and let $i_k$ denote the quantity of interior vertices of degree $k$.

\begin{prop}\label{pr:b+i=4}
If disk $D$ is quadrangulated by graph $G$, every vertex on $\partial D$ is of degree 2 or 3, and every vertex in $D^\circ$ is of degree 3 or 4, then $b_2+i_3 = 4$. 
\end{prop}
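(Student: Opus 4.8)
The plan is to carry out a direct Euler-characteristic count, bookkeeping the boundary and interior contributions separately. First I would fix notation: write $B = b_2 + b_3$ for the number of boundary vertices and $I = i_3 + i_4$ for the number of interior vertices, so that $V := B + I$ is the total vertex count; let $E$ be the number of edges and $F$ the number of faces, all quadrangles by hypothesis. Since $G$ is properly embedded in $D$, the boundary $\partial D$ is a cycle of $G$ with exactly $B$ edges, no boundary vertex has degree $0$ or $1$, and every boundary edge is incident to exactly one face of the embedding while every interior edge is incident to two. Since $D$ is a disk, Euler's formula takes the form $V - E + F = 1$.

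Next I would extract two auxiliary identities. Summing face lengths and counting appearances of edges on facial boundary walks gives $4F = 2(E - B) + B = 2E - B$. Summing degrees over all vertices and invoking the degree hypotheses gives $2E = (2b_2 + 3b_3) + (3i_3 + 4i_4)$.

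Finally I would combine the three equations. Eliminating $F$ from Euler's formula and using $V = B + I$ yields, after clearing denominators, $3B + 4I - 2E = 4$. Substituting $B = b_2 + b_3$, $I = i_3 + i_4$, and the degree-sum expression for $2E$, the left-hand side collapses to exactly $b_2 + i_3$, so $b_2 + i_3 = 4$.

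There is no serious obstacle here; it is essentially a one-line linear computation once the set-up is in place. The only points meriting a word of care are that the hypothesis of a proper embedding is precisely what guarantees $\partial D$ is a single cycle (so the boundary vertex and edge counts agree, and no boundary vertex is an artifact of degree less than $2$), and that the incidence identity $4F = 2E - B$ remains valid for multigraphs and for an edge meeting a single face twice, since it counts appearances of edges in facial boundary walks rather than distinct face--edge adjacencies.
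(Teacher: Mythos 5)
Your proposal is correct and follows essentially the same route as the paper: the degree-sum identity, the face-count identity $2E = 4F + B$, and Euler's formula $V - E + F = 1$ for the disk, combined linearly to isolate $b_2 + i_3$. The paper states these same three relations and says the result "readily follows"; you have simply written out the elimination explicitly.
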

\begin{proof}
Letting $e$ denote the quantity of edges in $G$ and $f$ the quantity of [interior] faces, we have
\[
2e = 2b_2 + 3(b_3+i_3)+4i_4 \mbox{ and } 2e = 4f + (b_2+b_3).
\]
Using these relations together with the Euler characteristic formula $(b_2+b_3+i_3+i_4) - e + f = 1$, the result readily follows.
\end{proof}
We refer to a quadrangulated disk as a {\it digon, triangle} or {\it square} according as  $b_2=2,3$ or $4$, respectively, and we refer to the vertices of degree 2 on the boundary of the disk as its {\it corners}.

Given a quadrangulation of a disk $D$ by a graph $G$ we construct a {\it corresponding buffered quadrangulation} of a disk $D'$ having no vertices of degree 2 on $\partial D'$ as follows: Let the vertices of $G$ on $\partial D$ be $v_1, \ldots, v_n$, in that cyclic order. Copy the embedding of $G$ in $D$ into  $D'^\circ$, place an $n$-cycle with vertices $v'_1, \ldots, v'_n$ , in that cyclic order, on $\partial D'$, and for each $i$ add an edge $v_1v'_1$ so as to form a ring of quadrangles incident to $\partial D'$; see Figure \ref{fig:buffer}. Observe that the respective degrees of the vertices of $G$ on $\partial D$ have now each increased by 1. 
\begin{figure}
\begin{center}
\includegraphics[width=.7\textwidth]{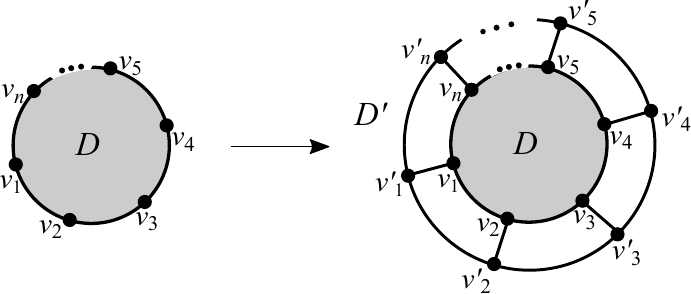}
\end{center}
\caption{Construction of the buffered quadrangulation.}
\label{fig:buffer}
\end{figure}

Given a vertex $v$ of degree 2 with incident edges $uv$ and $vw$, to {\it smooth the vertex $v$} means to add an edge $uw$ and delete $v$ along with $uv$ and $vw$. Suppose disk $D$ is quadrangulated by graph $G$.  If $T$ is a transversal $W$ or is a  transverse path $P$ having both ends on $\partial D$, then $G$ can be simplified by deleting all edges of $T$ and smoothing all resulting degree 2 vertices; because $\partial D$ is covered by a cycle of $G$, the resulting embedding is still cellular. We refer to the cellular embedding $\rho(G)$ obtained by deleting all closed  transverse walks in $G$ the {\it reduction of $G$}, and we say the quadrangulation is {\it irreducible} if $G = \rho(G)$. We refer to a quadrangulation of $D$ as a {\it quadrangulated immersion} if every vertex in $D^\circ$ has degree 3 or 4 and every vertex in $\partial D$ has degree 2 or 3.

\begin{theorem} \label{thm:quadrangulated_disks}
All irreducible quadrangulated immersions of a disk are isomorphic to one of the quadrangulations given in Figure \ref{fig:disks}, or to the buffered quadrangulation corresponding to one of those.
\end{theorem}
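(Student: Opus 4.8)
The plan is to reduce to the buffered case and to finitely many local configurations by exploiting the parity/counting constraint of Proposition \ref{pr:b+i=4}, namely $b_2+i_3=4$, together with bipartiteness. First I would dispose of the buffering: since a buffered quadrangulation has no degree-2 boundary vertices, and buffering adds a ring of quadrangles that contains no transversal with both ends on $\partial D$ (those would have to pass through the degree-3 boundary vertices), a buffered quadrangulation is irreducible if and only if the quadrangulation it buffers is irreducible. Hence it suffices to classify irreducible cubic quadrangulations of the disk \emph{with} at least one degree-2 boundary vertex, i.e.\ with $b_2\in\{2,3,4\}$, and then append the buffered versions of those to the list. This splits the argument into the digon, triangle, and square cases according to $b_2$.

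Next I would use $b_2+i_3=4$ to enumerate the possible pairs $(b_2,i_3)$: $(2,2)$, $(3,1)$, $(4,0)$. In each case I want to show that irreducibility forces the disk to be ``small''. The key leverage is that a transverse path through the interior, or a transverse closed walk, can be deleted; so in an irreducible quadrangulation every maximal transverse path issuing from an interior degree-4 vertex must terminate at a degree-3 vertex (boundary or interior) at both ends — it cannot close up and cannot run between two boundary vertices. I would track transverse paths starting at the corners (degree-2 boundary vertices): extending ``straight across'' from a corner, a transverse path runs through a sequence of quadrangles and interior degree-4 vertices and must end at a degree-3 vertex. Because there are only $i_3\in\{0,1,2\}$ interior degree-3 vertices and $b_3 = n - b_2$ boundary ones, and because bipartiteness rules out odd faces/cycles along the way, the number of quadrangles these transverse paths can traverse is tightly bounded. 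Concretely I expect: in the square case $i_3=0$, the two families of transverse paths (one for each pair of opposite corners) foliate the disk into a grid, and irreducibility forces a $1\times 1$ or other minimal grid; in the triangle case one interior degree-3 vertex acts as the common endpoint of transverse paths from the corners; in the digon case two interior degree-3 vertices play that role. In each configuration I would then check bipartiteness and the degree constraints to pin down the finitely many possibilities, matching them to Figure \ref{fig:disks}.

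The main obstacle I anticipate is the bookkeeping in the triangle and digon cases: once a transverse path from a corner reaches an interior degree-3 vertex $w$, the local picture at $w$ (three quadrangles around a degree-3 vertex) constrains how the remaining transverse paths behave, and one must rule out configurations where a further transverse path could still be deleted — i.e.\ genuinely verifying irreducibility of the candidates, not just necessity of the constraints. I would handle this by arguing that once all transverse paths from all corners are drawn, they partition $D$ into sub-quadrilaterals, each of which must itself be a single face (else there is an undeleted transverse path inside it), which forces the global structure. A secondary subtlety is making sure no ``accidental'' transversal appears that does not emanate from a corner; here I would note that any transverse path can be extended in both directions until it hits degree-3 vertices, and in these small configurations every such extension is forced to pass through a corner or an interior degree-3 vertex, so the enumeration above is exhaustive. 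The routine degree/parity checks that finish each case I would not belabor.
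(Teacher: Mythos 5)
Your overall strategy --- use Proposition \ref{pr:b+i=4} to bound the number of interior degree-3 vertices, classify by tracing transverse paths, and invoke irreducibility to forbid deletable paths --- is essentially the paper's, but your case enumeration has a hole and your reduction to the buffered case is asserted in the wrong logical direction. From $b_2+i_3=4$ the possible pairs $(b_2,i_3)$ with $b_2\geq 1$ are $(4,0)$, $(3,1)$, $(2,2)$, \emph{and} $(1,3)$; you list only the first three (note that ``at least one degree-2 boundary vertex'' gives $b_2\in\{1,2,3,4\}$, not $\{2,3,4\}$). There is no reason a disk cannot have exactly one corner, and the case $i_3=3$ is in fact the most intricate one in the paper: it splits into subcases according to whether two transverse paths from a single interior degree-3 vertex cross, and it produces several of the configurations in the top row of Figure \ref{fig:disks}. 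Omitting it means the classification misses a substantial part of the answer.

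Second, your treatment of buffering only establishes that the buffering of an irreducible quadrangulation is irreducible; what the theorem requires is the converse, namely that \emph{every} irreducible cubic quadrangulation with $b_2=0$ (equivalently $i_3=4$) \emph{is} a buffering of one with a degree-2 boundary vertex. That is a genuine argument, not a definition-chase: one must show that each boundary vertex $u_i$ has an interior neighbour $w_i$, that the $w_i$ are pairwise distinct (if $w_i=w_j$ one finds a transverse path from $\partial D$ to $\partial D$ through that vertex, contradicting irreducibility), that $w_1,\dots,w_n$ form a cycle which is not itself a transversal, and hence that the inner disk it bounds has a degree-2 boundary vertex and falls into the earlier cases. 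Without this, your ``hence it suffices'' does not follow. A smaller point: transverse paths cannot be launched from the corners as you propose, since a degree-2 boundary vertex has both of its incident edges on $\partial D$; the paper instead launches them from the interior degree-3 vertices (and handles the $i_3=0$ square case separately), which is what makes the bookkeeping in the $(2,2)$ and $(1,3)$ cases tractable.
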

\begin{figure}
\begin{center}
\includegraphics[width=.9\textwidth]{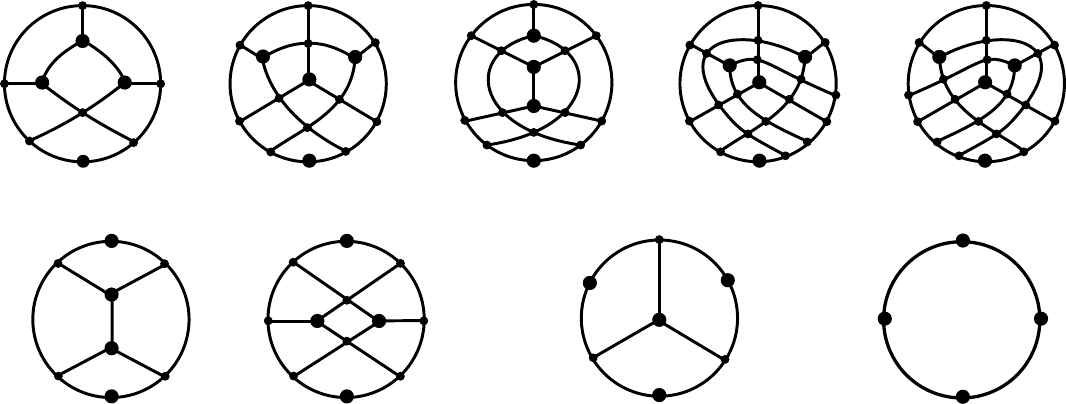}
\end{center}
\caption{The irreducible quadrangulated immersions of a disk with at least one degree-2 vertex on the boundary.}
\label{fig:disks}
\end{figure}
\begin{proof}
Suppose disk $D$ is irreducibly quadrangulated by graph $G$. Let $b_2$ and $i_3$ be defined as above; by Proposition \ref{pr:b+i=4} these satisfy $b_2 + i_3 = 4$. Note that for each of the $i_3$ degree-3 vertices $v$ in $D^\circ$ there are three transverse paths with one end on $v$, and because the quadrangulation is irreducible, all edges in $G$ belong to one of these paths. The strategy of this proof is to study the way such transverse paths can be arranged in $D$. 

We begin with an observation that will reduce our work later. Suppose that $v$ is a degree-3 vertex in $D^\circ$ and  that $P_1$ and $P_2$ are transverse paths with one end at $v$. If $P_1$ and $P_2$ share a single vertex $w\neq v$, then the portions of $P_1$ and $P_2$ extending between $v$ and $w$ bound a digon $D'$, and thus by Proposition \ref{pr:b+i=4} there must be exactly two degree-3 vertices in $D'^\circ$. We see that if $w$ has degree 3 in $G$ then we are in the case $i_3 = 4$, and otherwise we are in the case $i_3 =3$. Similarly, it is only possible to have a transverse path cross itself in the case $i_3 = 4$.

We now break up our analysis according to the possible values of $i_3$. 

\smallskip
\paragraph{\bf Case $\mathbf{i_3 = 1}$.} Let $v$ be the degree-3 vertex in $D^\circ$ and let $P_1,P_2,$ and $P_3$ be the transverse paths starting at $v$. As observed above, these paths must all extend from $v$ to $\partial D$ without intersecting each other, and thus in fact each consists of a single edge. Let $u_1,u_2,u_3$ be the ends of $P_1,P_2,P_3$, respectively, on $\partial D$; these three vertices and the three degree-2 vertices account for all vertices of $G$ on $\partial D$. If $u_i, u_j$ are adjacent on $\partial D$ for some $i,j$, then $P_i,P_j$ and the edge $u_iu_j$ bound a triangle $D'$ with corners $v,u_i$, and $u_j$, necessitating the presence of an additional degree-3 vertex $w$ in $D'^\circ$. Since $w$ would be in $D^\circ$, this contradicts the assumption that $i_3=1$. We conclude that $G$ in $D$ has the structure shown on the bottom row of Figure \ref{fig:disks}, to the right of center.

\smallskip
\paragraph{\bf Case $\mathbf{i_3 = 2}$.} Let $v_1, v_2$ be the degree-3 vertices in $D^\circ$, and let $u_1,u_2$ be the degree-2 vertices on $\partial D$.

Suppose first that \ transverse path from $v_1$ to $v_2$, so that all transverse paths from $v_1$ or from $v_2$ have their other end on $\partial D$. If the ends of all three transverse paths $P_1,P_2,P_3$ from $v_1$  lie on a single $u_1,u_2$-arc of $\partial D$, then $P_1,P_2,P_3$ together with portions of the boundary arc form two triangles, necessitating the presence of an additional two vertices of degree 3. Since there is only one degree 3 vertex other than $v_1$, this would be a contradiction. Thus two of $P_1,P_2,P_3$ end on one $u_1,u_2$-arc, and one ends on the other. We see that $P_1,P_2$ and $P_3$, together with appropriate portions of the boundary, form two squares and one triangle. This forces $v_2$ to be located in the interior of the triangle as in the case $i_3=1$, and for its transversely extended transverse paths to be arranged as shown on the bottom row of Figure \ref{fig:disks}, to the left of center.

Now suppose that there is a  transverse path $P$ from $v_1$ to $v_2$. By the work-saving observation above the path $P$ cannot cross any other transverse path. Let $P_1,P_2$ be the two transverse paths from $v_1$ other than $P$. If $P_1$ and $P_2$ end on the same  $u_1,u_2$-arc then they form a triangle $D'$, so $v_2$ must be located in $D'^\circ$. But this is the same as the case $i_3=1$, so we see that $P$ must cross either $P_1$ or $P_2$, a contradiction. It follows that $P_1$ and $P_2$ end on different  $u_1,u_2$-arcs, and by an analogous argument this is true of the two transverse paths from $v_2$. Finally, if a transverse path from $v_2$ crosses $P_1$ or $P_2$ then, together with $P$ or a portion of a  $u_1,u_2$-arc, we necessarily have a digon or a triangle, a contradiction. Thus, the configuration must be  as shown on the bottom row of Figure \ref{fig:disks}, to the far left.

\smallskip
\paragraph{\bf Case $\mathbf{i_3 = 3}$.} Let $u$ be the degree-2 vertex on $\partial D$ and let $v_1,v_2,v_3$ be the degree-3 vertices in $D^\circ$. In this case, it is possible that there is a vertex $v_i$ for which there are two transverse paths from $v_i$ that cross. In Case A we assume there is no such crossing, and in Case B we assume that there is.

\smallskip
\noindent{\bf Case A.\ } Suppose first that one of $v_1, v_2,$ and $v_3$ has no  transverse path to either of the other two; without loss of generality say this is $v_1$.  Then the  transverse paths from $v_1$, together with $\partial D$, form two triangles and, corresponding to the portion of $\partial D$ containing the degree-2 vertex $u$, a square. By Proposition \ref{pr:b+i=4}, one triangle, call it $T_2$, must contain $v_2$ and the other, call it $T_3$, contains $v_3$. Let $P$ denote the transverse path on the boundary of both $T_2$ and $T_3$, and let $P_2, P_3$ denote the other transverse paths from $v_1$ that are on the boundary of $T_2,T_3$, respectively. These are configured as in the following diagram.
\begin{center}
\includegraphics[height=1in]{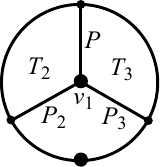}
\end{center}
The transverse paths from $v_2$ and from $v_3$ must extend to the boundary arcs  of $T_2$ and $T_3$, respectively, as in Case $i_3=1$. Those paths intersecting $P_2$ and $P_3$ have unique transverse extensions, and those extensions end at $\partial D$. Let $u_2$ and $u_3$ be the vertices at which  transverse paths $P'_2$ and $P'_3$ from $v_2$ and $v_3$, respectively, [first] intersect $P$. If $u_2\neq u_3$, then there unique transverse extensions of $P'_2$ and $P'_3$, which end at $\partial D$, yielding one of the two configurations on the top right of Figure \ref{fig:disks}, which violate the case A hypothesis for paths from $v_2$ or $v_3$, depending on the specific configuration. (In fact, the specific configuration depending on whether $u_2$ or $u_3$ was closer along $P$ to $v_1$.  If $u_2 = u_3$, then we obtain the configuration on the top of Figure \ref{fig:disks}, to the left of center. )

Suppose now that from each of $v_1,v_2,$ and $v_3$ there is a transverse path to at least one of the others. Suppose first, for the sake of contradiction, that there are three such paths $P_1,P_2,P_3$ connecting $v_1,v_2,$ and $v_3$ in a cyclic fashion (but not necessarily in that cyclic order). It cannot be that any of $P_1,P_2,P_3$ cross any of the others, as that would contradict our Case A hypothesis, and it cannot be that any of $P_1,P_2,P_3$ crosses itself, as we have $i_3 < 4$. This means that $P_1,P_2,P_3$ bound a triangle, again a contradiction since $i_3 =3$. 

It follows that $v_1,v_2,v_3$ are connected by transverse paths in a linear fashion, say by a transverse path $P_1$ from $v_1$ to $v_2$ and a transverse path $P_2$ from $v_2$ to $v_3$. Because of our Case A hypothesis and the impossibility of having any triangles in the $D^\circ$ which do not contain a degree-3 vertex, we must have the partial configuration shown below on the left:
\begin{center}
\includegraphics[height=1in]{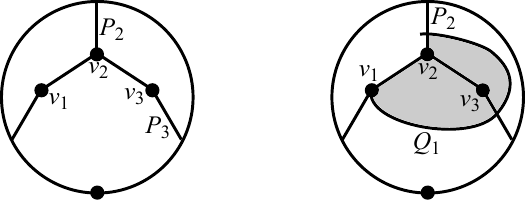}
\end{center}
Let $Q_1Q_3$ denote the third transverse paths from $v_1,v_3$, respectively, not shown on the left above. Again invoking our Case A hypothesis and the issue of triangles, if $Q_1$ crosses any of the other transverse paths shown, it must first cross $P_3$ and then, because a triangle may not be formed, it must cross $P_2$ as well. This yields the configuration shown above on the right. Note that the two shaded regions are squares, and therefore $Q_3$ necessarily forms a triangle, a contradiction. It follows that $Q_1$ does not cross any of the other transverse paths shown, and an analogous argument shows the same holds of $Q_3$. It now readily follows that we must have the configuration shown on the top left of Figure \ref{fig:disks}.

\smallskip
\noindent{\bf Case B.\ } We now suppose that there are two crossing transverse paths from a single degree-3 vertex, say $v_1$. This creates a digon, which necessarily contains $v_2$ and $v_3$ in its interior. To avoid creating any triangles adjacent to $\partial D$, we must have the following partial configuration:
\begin{center}
\includegraphics[height=1in]{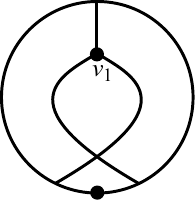}
\end{center}
Applying Case $i_3=2$, if $v_2$ and $v_3$ have a transverse path between them, then we obtain the configuration in the center of the top row of Figure \ref{fig:disks}, and if not then we obtain one of the two configuration on the right side of the top row. (See Case A for some comments about these.)

\smallskip
\paragraph{\bf Case $\mathbf{i_3 = 4}$.} Let the vertices on $\partial D$ be $u_1, u_2, \ldots, u_n$, in that cyclic order. Since $b_2 = 4 - i_3 = 0$, for each $i$ there is a vertex $w_i$ and an edge $u_iw_i$ not on $\partial D$. Moreover, because $D$ is quadrangulated, for each $i$ the edges $w_iu_i, u_iu_{i+1},u_{i+1}w_{i+1}$ are three sides of a quadrangle, and therefore $w_i \neq w_{i+1}$ and there is an edge $w_iw_{i+1}$.

Now, for the sake of contradiction, suppose that $w_i=w_j$ for some $i\neq j$. We then find four edges $w_iu_i, w_iw_{i-1}, w_iw_{i+1}$, and $w_iu_j$. Since $w_i$ has degree at most 4, there must be exactly four quadrangles at $w_i$; taking into consideration the orientation of the cycle on $\partial D$, in cyclic order around $w_i$ these are $u_{i-1}u_{i}w_iw_{i-1}$,   $u_{i}u_{i+1}w_{i+1}w_{i}$,  $u_{j-1}u_{j}w_{i}w_{i+1}$, and  $u_{j}u_{j+1}w_{i-1}w_{i}$. This implies, though, that the path with vertices $u_i, w_i=w_j, u_j$ is a transversal path from $\partial D$ to $\partial D$, contradicting the assumption that the configuration in question is irreducible. It follows that the vertices $w_1, w_2, \ldots, w_n$ are distinct and so, in that order, form a cycle of $G$. Since we have an irreducible embedding such a cycle is not a transversal, and therefore  bounds a disk $D'$ with at least one degree 2 vertex (degree 3 in $G$) on $\partial D'$, hence at most three degree-3 vertices in $D'^\circ$, confirming that $D$ is a buffering of one of the configurations identified in the previous cases.
\end{proof}

\section{Constructions} \label{sec:constructions}

\subsection{The two disks construction}

A simple way to construct quadrangulated immersions is to attach two quadrangulated disks along their boundaries. Given quadrangulated disks $D$ and $D'$ with a bijection $\varphi\colon V(\partial D)\to V(\partial D')$ which preserves the cyclic ordering of vertices (although not necessarily orientation) and such that either $\deg(v)\neq 2$ or $\deg\phi(v)\neq2$, we can attach $D$ to $D'$ by identifying $v$ with $\varphi(v)$ for each $v\in V(\partial D)$ and deleting the edges of $\partial D$; write $D \cup_\varphi D'$ for the resulting quadrangulated immersion. Note that neither of the quadrangulations of $D$ and $D'$, respectively, needs to be irreducible. Moreover, it is possible that $D \cup_\varphi D'$ contains one or more closed transverse walks; this can occur in one of two ways. If it occurs due to the presence of one or more transverse walks in $D$ from $\partial D$ to $\partial D$ and one or more transverse walks in $D'$ from $\partial D'$ to $\partial D'$, it can be avoided by removing those transverse walks from $D$ and $D'$. If it occurs as a transverse cycle consisting of the identified edges of $\partial D$ and $\partial D'$, then both $D$ and $D'$ are buffered, so it can be avoided by unbuffering ({\it i.e.}, deleting all boundary vertices and their incident edges) either $D$ or $D'$.

Figure \ref{fig:two_disks_construction} shows an example of two two disks constructions using the same disks $D$ and $D'$, but with different bijections. 
\begin{figure}
\begin{center}
\includegraphics[width=.97\textwidth]{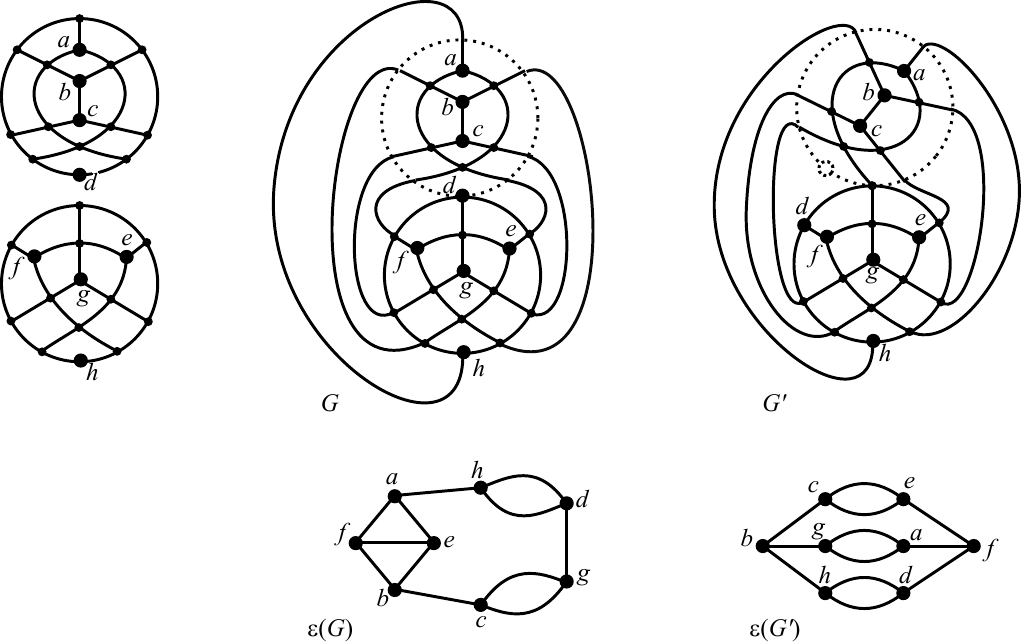}
\end{center}
\caption{On the right, quadrangulated immersions obtained from the two disks on the left, but via different bijections. Underneath, the extracted cubic graphs.}
\label{fig:two_disks_construction}
\end{figure}

\begin{prop} \label{pr:two_disk_conditions}
The two disk construction always produces a quadrangulated immersion. A quadrangulated immersion $G$ can be constructed via the two disks construction if and only if  $G$ contains a cycle comprised of complete transverse paths. 
\end{prop}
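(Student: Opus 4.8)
The plan is to prove the two directions separately. For the forward direction, I would show that the two disks construction always yields a cubic quadrangulation and that the resulting graph contains the required cycle. That the result is a cubic quadrangulation is mostly a matter of checking degrees and face lengths: identifying the boundary cycles and deleting the (doubled) boundary edges means each boundary vertex $v$ of $D$ inherits degree $\deg_D(v) + \deg_{D'}(\varphi(v)) - 2$, so the hypothesis that not both degrees equal 2 guarantees every identified vertex has degree 3 or 4; interior vertices are untouched; and each face is either an old interior face of $D$ or $D'$ (still a quadrangle) or one of the quadrangles straddling the seam, which are quadrangles by the face-length conditions along $\partial D$ and $\partial D'$. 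The possible appearance of closed transverse walks is handled exactly as the paragraph preceding the proposition indicates — either reduce $D$ and $D'$ first, or unbuffer one of them — and I would simply invoke that discussion. The cycle comprised of complete transverse paths is precisely the image of $\partial D$ (equivalently $\partial D'$) after the identification: each edge of $\partial D$ lies on a seam quadrangle, and crossing the seam at each identified vertex is transverse because the edge entering from the $D$ side and the edge leaving on the $D'$ side are separated in the rotation by the two seam edges (here I would point to the local picture at an identified vertex). Since every vertex of that cycle that was originally a corner of $D$ or $D'$ has been smoothed into a degree-4 crossing, this closed walk is a concatenation of complete transverse paths.

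For the reverse direction, suppose $G$ is a cubic quadrangulation containing a cycle $C$ that is a concatenation of complete transverse paths $P_1, \dots, P_k$. The idea is to cut $G$ along $C$. Since $G$ is embedded in $S^2$ and $C$ is a simple closed curve in the embedding, the Jordan curve theorem splits $S^2 \setminus C$ into two open disks; let $D$ and $D'$ be their closures, each carrying the induced subgraph of $G$ together with a copy of $C$ as its boundary. I would check that $C$ is a cycle of $G$ in the graph-theoretic sense (it is, by hypothesis) and hence that $D$ and $D'$ are properly embedded. Each face of $G$ lies entirely in one side, so every face of $D$ and $D'$ is a quadrangle; thus both are quadrangulated disks. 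The key point is the degree bookkeeping along $C$: a vertex $v$ on $C$ which is an \emph{interior} vertex of some path $P_j$ has degree 4 in $G$ and exactly two of its edges lie on $C$, so it has degree 3 on whichever side the other two edges split as $2+0$... more carefully, a degree-4 crossing on $C$ contributes one non-$C$ edge to each side, giving it degree $2+1 = 3$ on each of $D$ and $D'$; a vertex $v$ which is an \emph{endpoint} of two consecutive paths $P_j, P_{j+1}$ has degree 3 in $G$, two of its edges on $C$ and one off, so it becomes a corner (degree 2) on one side and degree 3 on the other. Hence on $\partial D$ and $\partial D'$ every vertex has degree 2 or 3, and for each boundary vertex not both sides give degree 2 — exactly the hypothesis needed for the gluing map $\varphi = \mathrm{id}_C$. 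Reversing the cut, $G = D \cup_\varphi D'$.

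The main obstacle I anticipate is the reverse direction's verification that $C$, which is given only as an abstract cycle assembled from transverse paths, actually sits in the embedding as an embedded (non-self-crossing) closed curve whose two sides are genuine disks — a priori a ``cycle comprised of complete transverse paths'' might revisit a vertex. I would need to argue that a transverse path cannot share a vertex with another without forcing a configuration ruled out elsewhere, or else weaken the statement of $C$ to mean an embedded cycle and note that this is what ``cycle'' must mean here (a cycle as a subgraph is automatically simple). Assuming $C$ is simple, the Jordan curve argument and the degree count along $C$ are routine; the only other mild subtlety is making sure that when $C$ is the entire equator the two disks are each quadrangulated with a genuine boundary cycle rather than a degenerate one, which follows from $k \geq 1$ and the fact that $G$ has at least the eight cubic vertices guaranteed by the Euler-characteristic count. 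I would also remark that the corner/degree-3 dichotomy along $C$ matches the $b_2 + i_3 = 4$ accounting of Proposition~\ref{pr:b+i=4} on each side, which serves as a useful consistency check but is not logically needed.
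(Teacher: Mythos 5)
Your overall strategy --- degree and face checks for the first assertion, cutting along $C$ for one implication, reading the required cycle off the seam for the other --- is the same as the paper's, and your ``reverse'' direction (cycle $\Rightarrow$ constructible), including the degree bookkeeping along $C$ and the remark that a cycle is automatically a simple closed curve, matches the paper's argument essentially verbatim.

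The gap is in the ``only if'' direction (constructible $\Rightarrow$ $G$ contains a cycle of complete transverse paths). First, a local error: a corner of $D$ (degree $2$) is identified with a vertex of degree $3$ on $\partial D'$ and ends up with degree $2+3-2=3$, not $4$; it is the boundary vertices having degree $3$ on \emph{both} sides that become degree-$4$ crossings. The degree-$3$ images of corners are precisely what break the seam into \emph{complete} transverse paths, so you have the roles reversed (note that your own bookkeeping in the cutting direction says exactly this, correctly). This matters because of the case you never address: the construction's hypothesis only forbids $\deg(v)=\deg(\varphi(v))=2$ simultaneously, so it is entirely possible that \emph{neither} $\partial D$ nor $\partial D'$ has any degree-$2$ vertex --- for instance when both disks are buffered, as Theorem \ref{thm:quadrangulated_disks} shows they can be. In that case every seam vertex has degree $4$, the image of $\partial D$ is a closed transverse walk, and it is \emph{not} a cycle comprised of complete transverse paths, since a complete transverse path must terminate at vertices of degree $\neq 4$; your candidate cycle simply does not qualify. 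The paper closes this case by invoking the $i_3=4$ analysis in the proof of Theorem \ref{thm:quadrangulated_disks}: a quadrangulated disk with no corner on its boundary is a buffering of a disk $\hat D$ that does have one, and $\hat D\cup_{\hat\varphi}D'=D\cup_\varphi D'$ as embeddings, so the seam of the unbuffered decomposition supplies the required cycle. Without some argument of this kind, the equivalence is unproved whenever both boundaries are corner-free.
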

\begin{proof}
The first assertion is immediate since all faces, interior vertices and interior edges remain as they were in their respective disks,  the degree of each boundary vertex goes up by at most 1, and no vertices remain at degree 2. 

Suppose first that $G$ contains a cycle $C$ comprised of complete transverse paths. Since $C$ is a cycle it is a Jordan curve in the sphere, so is the common boundary of two closed disks $D$ and $D'$. Because $C$ is comprised of complete transverse paths, each of $D$ and $D'$ is quadrangulated. Corresponding to each degree 3 vertex on $C$ there will be a degree 2 vertex in exactly one of $\partial D$ or $\partial D'$; the corresponding vertex in the boundary of the other disk will have degree 3. Corresponding to each degree 4 vertex on $C$ there will be a degree 3 vertex in both $\partial D$ and $\partial D'$. 

Conversely, if the two disk construction is performed with disks $D$ and $D'$ such that  at least one of $\partial D$ and $\partial D'$ has a degree 2 vertex, then the image of $\partial D$ in $D \cup_\varphi D'$ will be a cycle comprised of complete transverse paths. 

Suppose then that neither $\partial D$ nor $\partial D'$ has a degree 2 vertex, so that the image of $\partial D$ in $D \cup_\varphi D'$ is a transverse cycle $C$. As shown in the case $i_3 = 4$ in the proof of Theorem \ref{thm:quadrangulated_disks}, $D$ is a buffering of a disk $\hat D$ for which $\partial \hat D$ does itself have a degree 2 vertex. The map $\varphi \colon \partial D \to \partial D'$ induces an obvious map $\hat\varphi\colon\partial \hat D \to \partial D'$, and the image of $\partial D$ in $\hat D \cup_{\hat\varphi} D' = D \cup_\varphi D'$ will be a cycle comprised of complete transverse paths. \end{proof}

\begin{figure}
\begin{center}
\includegraphics[width=.9\textwidth]{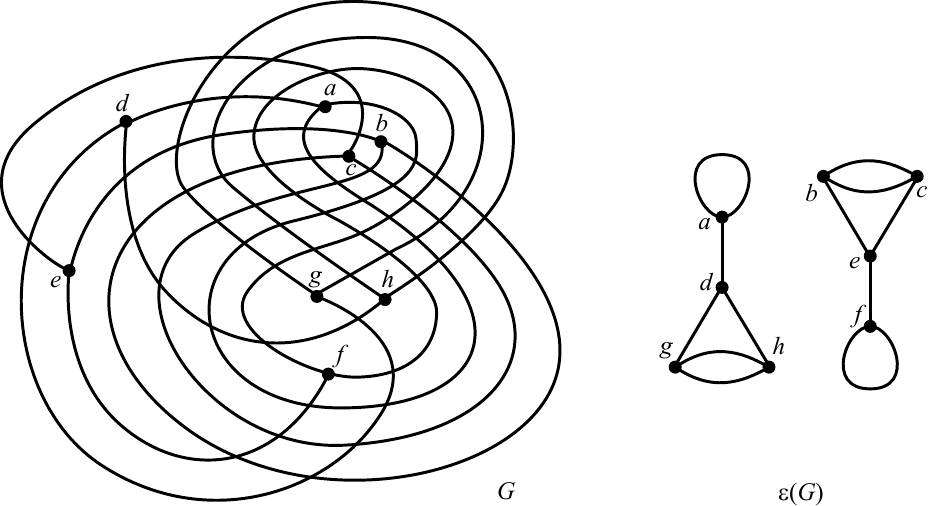}
\end{center}
\caption{A quadrangulated immersion, constructed by hand, that cannot be constructed using the two disks construction.}
\label{fig:not_two_disks}
\end{figure}
Given an edge $e$ in the extraction $\epsilon(G)$, we write $\epsilon^{-1}(e)$ to denote the complete transverse path in $G$ corresponding to $e$. The following result confirms that not all quadrangulated immersions can be constructed with the two disks construction. 
\begin{prop}
The example shown in Figure \ref{fig:not_two_disks} fails the condition in Proposition \ref{pr:two_disk_conditions}, and thus cannot be constructed with the two disks construction. 
\end{prop}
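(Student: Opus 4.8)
The plan is to verify directly that the cubic quadrangulation $G$ of Figure~\ref{fig:not_two_disks} contains no cycle comprised of complete transverse paths; by Proposition~\ref{pr:two_disk_conditions} this is precisely what is claimed. It is cleanest to phrase everything in the extraction $\epsilon(G)$. Each degree-$4$ vertex $w$ of $G$ is an interior vertex of exactly two complete transverse paths, so it determines an unordered pair of edges of $\epsilon(G)$; call two edges of $\epsilon(G)$ \emph{conflicting} when they arise this way, and call a cycle of $\epsilon(G)$ \emph{conflict-free} if it contains no conflicting pair. Concatenating the paths $\epsilon^{-1}(e)$ along a conflict-free cycle of $\epsilon(G)$ produces a cycle of $G$ built from complete transverse paths: it is a genuine cycle precisely because the conflict-free hypothesis prevents any degree-$4$ vertex from being traversed twice. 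Conversely, a cycle of $G$ comprised of complete transverse paths passes transversally through each degree-$4$ vertex it meets and so decomposes into paths $\epsilon^{-1}(e)$ forming a conflict-free cycle of $\epsilon(G)$. Hence it suffices to show that $\epsilon(G)$ has no conflict-free cycle.

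The first concrete step is to read off from Figure~\ref{fig:not_two_disks} the combinatorial data of the immersion: the $8$ vertices and $12$ edges of $\epsilon(G)$, and, for each degree-$4$ vertex of $G$, the conflicting pair of edges crossing there. This yields an explicit conflict relation on $E(\epsilon(G))$. I would then rule out conflict-free cycles by a bounded argument, ideally in the following economical form: choose, at each degree-$4$ vertex of $G$, one edge of its conflicting pair to delete, in such a way that what remains of $\epsilon(G)$ is a forest. A conflict-free cycle uses at most one edge from each conflicting pair, hence would survive into this forest, which is impossible. If the crossing structure is not rich enough for a single such choice to acyclify $\epsilon(G)$ --- plausible, since a forest on $8$ vertices has at most $7$ of the $12$ edges, so this route needs at least five crossings --- I would instead enumerate the (few) cycles of $\epsilon(G)$, grouped by length or by an edge forced onto any long cycle, and in each group point to a conflicting pair. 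As an alternative or cross-check one could assume such a cycle $C$ exists, cut $S^2$ along $C$ into two cubic quadrangulations of a disk, appeal to Theorem~\ref{thm:quadrangulated_disks} and Figure~\ref{fig:disks} to determine which disks (up to reduction) these must be, and check that no admissible regluing reconstitutes $G$; this is heavier, since one must track reduction and regluing.

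The only real difficulty is organizational rather than conceptual. Once the conflict relation has been extracted from Figure~\ref{fig:not_two_disks}, certifying that every cycle of $\epsilon(G)$ meets it is a finite computation, and the work lies in presenting it compactly --- via a single ``conflict-respecting forest'' as above, or via a small edge-set through which all cycles must pass together with the observation that each of those edges is forced into a conflict by the quadrangular structure around the neighbouring degree-$4$ vertices. No idea beyond Proposition~\ref{pr:two_disk_conditions} and a careful look at the figure is needed.
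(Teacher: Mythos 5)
Your overall strategy is the same as the paper's: translate the condition of Proposition \ref{pr:two_disk_conditions} into a statement about cycles of $\epsilon(G)$, record for each crossing of $G$ which edges of $\epsilon(G)$ it forces into ``conflict,'' and then check that every cycle of $\epsilon(G)$ contains a conflict. The paper does exactly this, by a short enumeration.

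There is, however, one genuine gap in your setup. You assert that each degree-$4$ vertex of $G$ ``is an interior vertex of exactly two complete transverse paths'' and hence determines an unordered pair of (implicitly distinct) edges of $\epsilon(G)$. That is false in general and, as it happens, false for most of the crossings in this particular example: a degree-$4$ vertex can be a point where a single complete transverse path crosses \emph{itself}. The paper's verification leans almost entirely on such self-intersections --- the preimages of the loops $aa$ and $ff$ self-intersect, as does the preimage of one of the two $g,h$ edges and one of the two $b,c$ edges; only the pairs $\{dg,dh\}$ and $\{ec,eb\}$ are conflicts between two distinct edges. Your conflict relation must therefore be allowed to be reflexive (an edge conflicting with itself lies on no cycle of $G$ at all, since $\epsilon^{-1}(e)$ is then not even a path that could be a cycle or an arc of one), and your equivalence between conflict-free cycles of $\epsilon(G)$ and transverse-path cycles of $G$ should be restated to exclude self-intersecting edges outright rather than treating them as ordinary pairs. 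Once that is repaired, your plan does close: deleting the four self-intersecting edges together with one edge from each of the two honest conflicting pairs leaves six edges on eight vertices forming a forest, so your ``conflict-respecting forest'' certificate works and is arguably a cleaner way to package the paper's case check. The remaining shortcoming is only that the proposal defers the actual extraction of the crossing data from Figure \ref{fig:not_two_disks}, which is where all the content of this proposition lives.
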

\begin{proof}
Let $G$ denote the quadrangulated immersion shown in Figure \ref{fig:not_two_disks}. We check the condition of Proposition \ref{pr:two_disk_conditions}.

Note first that each complete transverse path corresponds to an edge of $\epsilon(G)$, so to show that there are no cycles in $G$ comprised of complete transverse paths, we must show that no cycle in $\epsilon(G)$ can be realized as a cycle in $G$. 
\begin{itemize}
\item Let $aa$ and $ff$ denote the loops on vertices $a$ and $f$, respectively. Both $\epsilon^{-1}(aa)$ and $\epsilon^{-1}(ff)$  self-intersect, so neither is a cycle in $G$. 

\item The transverse paths $\epsilon^{-1}(dg)$ and $\epsilon^{-1}(dh)$ intersect each other, and thus cannot be part of a cycle in $G$. Similarly, the transverse paths $\epsilon^{-1}(ec)$ and $\epsilon^{-1}(eb)$ cannot be part of a cycle in $G$.

\item For one of the edges $\sigma$ connecting $g$ and $h$ in $\epsilon(G)$ the path $\epsilon^{-1}(\sigma)$ self-intersects, so we have no cycle in $G$ corresponding to the 2-cycle on $g$ and $h$ in $\epsilon(G)$. Similarly, for one of the edges $\tau$ connecting $b$ and $c$ in  $\epsilon(G)$ the path $\epsilon^{-1}(\tau)$ self-intersects. 
\end{itemize}
We see that there are no cycles in $G$ comprised of complete transverse paths.
\end{proof}
A second example that cannot be constructed with the two disks construction is shown in Figure \ref{fig:023}.

\subsection{The radial construction}

Given a cellular embedding $\Gamma$ of a graph $G$ in a surface $S$, the radial graph $R(G)$ is the graph with vertex set $V(\Gamma) \cup F(\Gamma)$ and edge set $\{vf \mid v\in V(\Gamma), f \in F(\Gamma), v \in \partial f\}$. $R(G)$ naturally inherits an embedding in $S$ from the embedding of $G$ in $S$, and it is easily verified that  the faces of $R(G)$ in $S$ are all quadrangles. A vertex of $R(G)$ corresponding to a vertex of $G$ has the same degree as in $G$, and a vertex of $R(G)$ corresponding to a face $f$ of $G$ has degree equal to the number of vertices on $\partial f$. It follows that if $G$ in $S$ is a quadrangulated immersion, then $R(G)$ in $S$ is also a quadrangulated immersion. See Figure \ref{fig:radial} for examples. 
\begin{figure}
\begin{center}
\includegraphics[width=.95\textwidth]{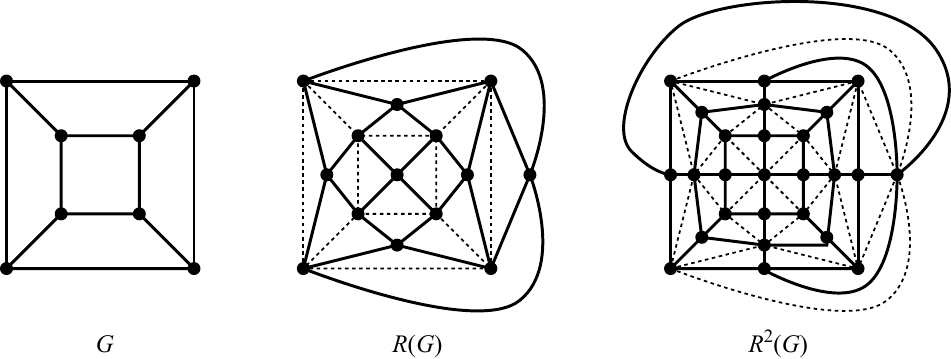}
\end{center}
\caption{A quadrangulated immersion, its radial graph, and the radial graph of its radial graph. In this example, $G$ is the cube  and $R(G)$ is a quadrangulated immersion of two copies of $K_4$.}
\label{fig:radial}
\end{figure}
Note that even if $G$ in $S$ was irreducible, it need not be the case that $R(G)$ in $S$ is irreducible. In Figure \ref{fig:radial}, $R(G)$ is irreducible but $R^2(G)$ is not. In fact, in that example we have $\epsilon(R^2(G)) = G$; this is a special case of a general property, as Proposition \ref{pr:dualradial} states.

\begin{prop} \label{pr:dualradial}
For any embedding of a graph $G$ in a surface $S$, we have $$\epsilon\left(R^2(G)\right) = \epsilon(G).$$ 
\end{prop}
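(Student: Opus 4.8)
The plan is to make the combinatorics of $R^2(G)=R(R(G))$ completely explicit, then to match the transverse paths of $G$ with a distinguished family of transverse paths of $R^2(G)$, and finally to read off $\epsilon(R^2(G))=\epsilon(G)$ from that matching. I will assume throughout that $G$ is embedded as a quadrangulation (every face has length $4$), which is the setting in which the statement is applied; I indicate below where this hypothesis enters and what the general situation is.

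First I would describe $R^2(G)$. Recall that $R(G)$ is bipartite with parts $V(G)$ and $F(G)$, that all of its faces are quadrangles, and that its faces correspond to the edges of $G$: the face of $R(G)$ belonging to an edge $e=uv$ of $G$ bordered by faces $f,g$ is the $4$-cycle $u\,f\,v\,g$. Applying $R$ once more, $V(R^2(G))=V(G)\sqcup F(G)\sqcup E(G)$, with $E(G)$ forming one side of the bipartition and $V(G)\cup F(G)$ the other. An ``edge-vertex'' $e=uv$ with incident faces $f,g$ is adjacent precisely to $u,v,f,g$, with rotation $(u,f,v,g)$; a ``vertex-vertex'' $v$ is adjacent to the edge-vertices of the edges of $G$ at $v$, listed in the rotation order of $G$ at $v$; and a ``face-vertex'' $f$ is adjacent to the edge-vertices of the edges on $\partial f$, in boundary order. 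Counting incidences then gives $\deg_{R^2(G)}(v)=\deg_G(v)$, $\deg_{R^2(G)}(f)=|\partial f|=4$, and $\deg_{R^2(G)}(e)=4$. In particular the odd-degree vertices of $R^2(G)$ are exactly the odd-degree vertices of $G$, and $R^2(G)$ has an even-degree vertex of degree other than $4$ iff $G$ does; so $\epsilon(R^2(G))$ is defined exactly when $\epsilon(G)$ is, and the two then have the same vertex set.

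The core step is to identify the edges. Because of the rotation $(u,f,v,g)$, a transverse path of $R^2(G)$ through an edge-vertex $e=uv$ must pass ``straight through'', either between $u$ and $v$ or between $f$ and $g$; call these $V$- and $F$-passages. Since a transverse path leaving a degree-$4$ vertex-vertex (resp.\ face-vertex) exits through an edge-vertex, a maximal transverse path of $R^2(G)$ has a single type throughout; a $V$-type one alternates vertex-vertices and edge-vertices, and since edge-vertices have degree $4$ its two endpoints are odd-degree vertex-vertices, i.e.\ elements of $V(\epsilon(G))$. Reading off the vertex-vertices of such a path in order yields a walk $v_0,v_1,\dots,v_k$ in $G$ in which $v_{i-1}v_i$ is the edge of the intervening edge-vertex; I would then verify that the $V$-passage condition at each interior edge-vertex together with the non-consecutivity condition at each interior degree-$4$ vertex-vertex $v_i$ amounts exactly to the requirement that $v_0,\dots,v_k$ be a complete transverse path of $G$, using that the rotation of $R^2(G)$ at $v_i$ coincides with the rotation of $G$ at $v_i$. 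This assignment is plainly reversible, giving a bijection, preserving the endpoint pair, between complete transverse paths of $G$ and maximal $V$-type transverse paths of $R^2(G)$. (Dually, $F$-type transverse paths of $R^2(G)$ correspond to transverse paths of the dual $G^*$; since $G$ is a quadrangulation, $G^*$ is $4$-regular, so none of these have valid endpoints and the family is invisible to $\epsilon$.)

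It then remains to assemble the conclusion. For odd-degree $u,v\in V(G)$, there is an edge $uv$ in $\epsilon(R^2(G))$ iff some transverse path of $R^2(G)$ joins $u$ and $v$; such a path is necessarily $V$-type (its ends are vertex-vertices), so by the bijection this holds iff some transverse path of $G$ joins $u$ and $v$, i.e.\ iff $uv\in E(\epsilon(G))$; and no other edges of $\epsilon(R^2(G))$ arise. Combined with the matching of vertex sets, and since the correspondence carries rotations to rotations and hence respects crossings, this yields $\epsilon(R^2(G))=\epsilon(G)$ — with matching associated immersions in $S$, not merely as abstract graphs. I expect the only genuinely delicate point to be the bookkeeping of the rotation schemes in the twice-iterated radial graph: pinning down that the rotation at an edge-vertex is $(u,f,v,g)$, and checking that the transverse condition at a degree-$4$ vertex-vertex of $R^2(G)$ really does reduce to the transverse condition at that vertex in $G$. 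The hypothesis that every face of $G$ has length $4$ enters only to kill the dual contribution; without it the correct statement is $\epsilon(R^2(G))=\epsilon(G)\sqcup\epsilon(G^*)$.
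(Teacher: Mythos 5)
Your proof is correct and takes essentially the same approach as the paper: the paper's argument is precisely that $R^2(G)$ is $G$ with each edge subdivided, a degree-$4$ vertex added in each face, and the new edges forming a family of transversals — which is exactly your decomposition of transverse paths into $V$-type (the subdivided transverse paths of $G$) and $F$-type (the added transversals, i.e.\ your dual contribution). You simply carry out in full the rotation-scheme bookkeeping that the paper leaves implicit, including the observation that the face-vertices have degree $4$ only because $G$ is a quadrangulation.
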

\begin{proof}
Observe that, from any embedded graph $G$, the double radial $R^2(G)$ can be obtained by
\begin{enumerate} 
\item subdividing each edge $e$ of $G$ with a new vertex $v_e$ in its center; 
\item adding a vertex $v_f$ in the center of each face $f$ of $G$; and,
\item for each face $f$ and each edge $e$ on $\partial f$, adding the edge $v_ev_f$. 
\end{enumerate}
It follows, therefore, that each edge-vertex $v_e$ and each face-vertex $v_f$ has degree 4, and all new edges belong to the induced subgraph on the set of such vertices. This essentially says that $R^2(G)$ is obtained from $G$ by adding in a family of transversals.
\end{proof}

For the next result, note that if $G$ is a quadrangulated immersion then vertices in $R(G)$ corresponding to faces of the embedding must have degree 4. It follows that if $u$ is a vertex of $\epsilon(R(G))$, then $u$ is a vertex of $R(G)$ corresponding to a vertex of $G$. 

\begin{theorem} 
If $G$ is a quadrangulated immersion with an odd quantity of vertices, then $\epsilon(R(G))$ is not connected. Furthermore, if $\epsilon(R(G))$ is connected then the induced embedding of $R(G)$  contains at least one transversal. 
\end{theorem}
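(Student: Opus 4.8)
The plan is to isolate a single structural fact about transverse paths in the radial graph $R(G)$ and then deduce both assertions from it.

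\emph{The key fact I would establish first.} Fix a bipartition $V(G)=A\sqcup B$ of $G$, which exists because every cubic quadrangulation is bipartite. The radial graph $R(G)$ is bipartite with parts $V(G)$ and $F(G)$, so every transverse path of $R(G)$ alternates between vertices of $G$ and face-centers of $G$; moreover each face of $G$ has four distinct boundary vertices (the face-center has degree $4$ in $R(G)$), so ``the opposite vertex on a face'' is well defined. Whenever a transverse path of $R(G)$ runs through a face-center $v_f$, it enters and leaves at two vertices of $\partial f$ that are opposite on the quadrilateral $f$, and opposite vertices of a $4$-cycle in a bipartite graph lie in the same part. Hence, along any transverse path of $R(G)$, all the $G$-vertices it meets lie in a single part of $\{A,B\}$; in particular every complete transverse path of $R(G)$ joins two vertices of $G$ in the same part. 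Since $\epsilon(R(G))$ has vertex set the eight degree-$3$ vertices of $G$, with edges the complete transverse paths of $R(G)$, no edge of $\epsilon(R(G))$ joins a degree-$3$ vertex of $A$ to one of $B$.

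\emph{First statement.} If $\nu$ is odd then, after possibly swapping $A$ and $B$, Corollary~\ref{cor:6and2} gives $\nu_{A,3}=6$ and $\nu_{B,3}=2$, so both parts contain degree-$3$ vertices. Since $\epsilon(R(G))$ has no edge between these two nonempty vertex sets, it is disconnected.

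\emph{Second statement.} Suppose $\epsilon(R(G))$ is connected. By the key fact this forces all eight degree-$3$ vertices of $G$ into one part, say $B$; then every vertex of $A$ has degree $4$ in $G$, hence degree $4$ in $R(G)$. Choose any face $f$ of $G$ (there are at least six) and start a transverse walk of $R(G)$ at $v_f$, entering along the edge from one of the two $A$-vertices on $\partial f$. Following the transverse rule, the walk alternates face-center $\to$ $A$-vertex $\to$ face-center $\to$ $A$-vertex $\to\cdots$: it exits each face-center at the opposite (hence $A$-) vertex of that quadrilateral, and each $A$-vertex at the opposite incident face. Every vertex it visits -- each face-center and each such $A$-vertex -- has degree $4$, so the transverse continuation is defined both forward and backward; as $R(G)$ is finite the walk is eventually periodic and therefore closes up into a transverse closed walk, i.e.\ a transversal of $R(G)$.

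I expect the second statement to be the crux. The subtlety there is choosing the correct ``direction'' for the transverse walk -- the one that tracks opposite vertices staying inside the degree-$4$ part $A$ -- and then observing that a transverse walk all of whose vertices have degree $4$ can never terminate at a degree-$3$ vertex and so must close up; once that is in place it is genuinely a transversal. The remaining steps are bipartite bookkeeping about opposite vertices of quadrilateral faces, which is routine.
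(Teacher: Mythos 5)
Your proof is correct and follows essentially the same route as the paper: the same key lemma that a transverse path in $R(G)$ only joins $G$-vertices in one block of the bipartition (you justify it via opposite vertices of a quadrilateral face, the paper via an even-length replacement path in $G$ --- the same observation), then Corollary~\ref{cor:6and2} for the first claim and the ``all remaining vertices have degree 4, so the transverse walk closes up'' argument for the second. Your write-up is in fact slightly more explicit than the paper's at the final step, where the paper simply asserts that a transverse-path component consisting only of degree-4 vertices must contain a transversal.
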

\begin{proof}
Observe first that if $G$ is a quadrangulated immersion and $u,w$ are vertices in the same component of $\epsilon(R(G))$, then $u,w$ correspond to vertices in the same class of the bipartition of $G$. We can see this as follows: Suppose $u,w$ are degree-3 vertices in $R(G)$ and $P$ is a transverse path from $u$ to $w$ in $R(G)$. Write $P=v_1,f_1,v_2,f_2, \ldots, v_k,f_k$ where $v_i$ are vertices of $R(G)$ corresponding to vertices of $G$ (with $v_1=u$ and $v_k=w$) and $f_i$ are vertices of $R(G)$ corresponding to faces of the embedding of $G$. Since every face of $G$ is a quadrangle we can construct an even-length path $P'$ in $G$ from $u$ to $w$ by replacing each subpath $v_i,f_i,v_{i+1}$ with one of the two subpaths $v_i,x_i,v_{i+1}$, where $x_i$ is a vertex on $\partial f_i$ other than $v_i$ and $v_{i+1}$.  

If $G$ is a quadrangulated immersion with an odd quantity of vertices, then by Corollary \ref{cor:6and2} not all degree 3 vertices are in the same bipartition class of $G$. Combining this with the observation from the previous paragraph, we see that  $\epsilon(R(G))$ cannot be connected. 

On the other hand, if $\epsilon(R(G))$ is connected then all cubic vertices of $G$ must be in the same class of the bipartition of $G$. Thus all vertices in the other bipartition class of $G$ have degree 4, so that one transverse-path component of $R(G)$ consists only of vertices of degree 4, and therefore contains at least one transversal. 
\end{proof}

Figure \ref{fig:bipartite_radial} illustrates an example of a quadrangulated immersion $G$ for which $\epsilon(R(G))$ is not connected, and Figure \ref{fig:bipartite_radial_connected} illustrates an example of a quadrangulated immersion $G$ for which $\epsilon(R(G))$ is connected. 
\begin{figure}
\begin{center}
\includegraphics[width=.97\textwidth]{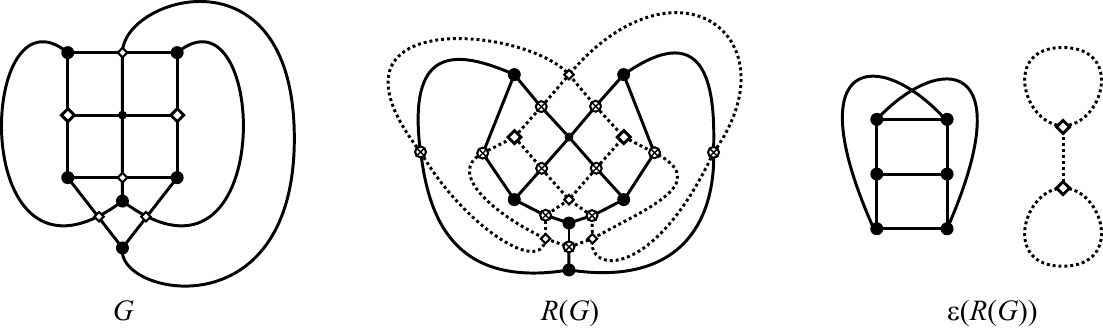}
\end{center}
\caption{On the left, a quadrangulated immersion $G$ with the vertices of one bipartition class shown filled in and the vertices of the other bipartition class shown as diamonds with white centers. In the center, the radial graph $R(G)$, with vertices corresponding to faces of $G$ shown as ``$\otimes$'' and the distinction between rendering of edges indicating which bipartition class they belong to. On the right, the disconnected extraction $\epsilon(R(G))$.}
\label{fig:bipartite_radial}
\end{figure}
\begin{figure}
\begin{center}
\includegraphics[width=.97\textwidth]{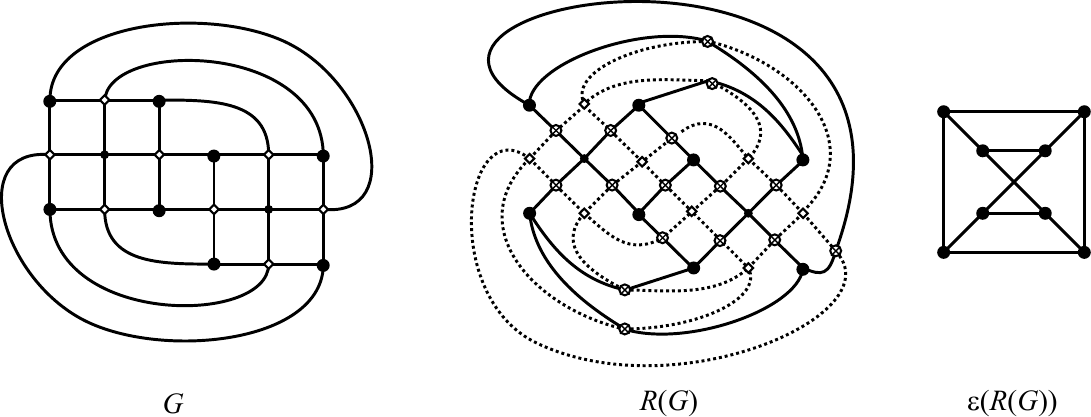}
\end{center}
\caption{On the left, a quadrangulated immersion $G$ with the vertices of one bipartition class shown filled in and the vertices of the other bipartition class shown as diamonds with white centers. In the center, the radial graph $R(G)$, with vertices corresponding to faces of $G$ shown as ``$\otimes$'' and the distinction between rendering of edges indicating which bipartion class they belong to. Note that the dashed edges and diamond vertices comprise a single transversal. On the right, the connected extraction $\epsilon(R(G))$.}
\label{fig:bipartite_radial_connected}
\end{figure}

\subsection{The spiral construction}

Let disk $D$ have a quadrangulated immersion with $n\geq 6$ vertices on $\partial D$, including at least one degree-2 vertex and one degree-3 vertex on $\partial D$. Label the vertices on $\partial D$ consecutively with $v_0, v_1, \ldots, v_{n-1}$ where $v_0$ has degree 3 and $v_{n-1}$ has degree 2 and choose an integer $l\geq 1$. If $l<n/2$ then we impose the additional requirement that $v_{l+1},v_{l+n/2},$ and $v_{l+n/2+1}$ all have degree 3. The {\it spiral construction} proceeds in two steps. 
\begin{figure}
\begin{center}
\includegraphics[width=.95\textwidth]{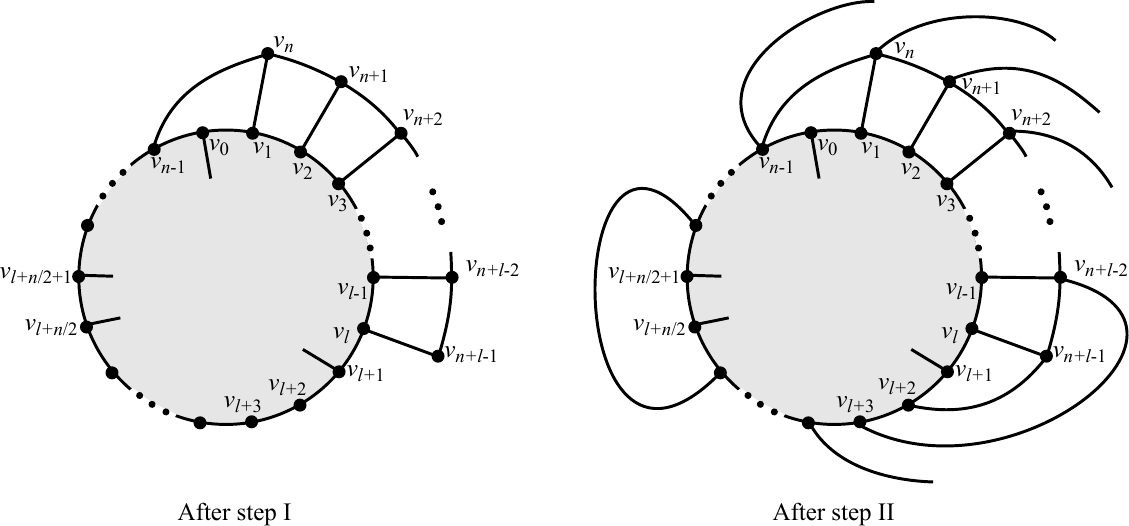}
\end{center}
\caption{The spiral construction applied with $l<n/2$.}
\label{fig:spiral-generic}
\end{figure}
\begin{figure}
\begin{center}
\includegraphics[width=.95\textwidth]{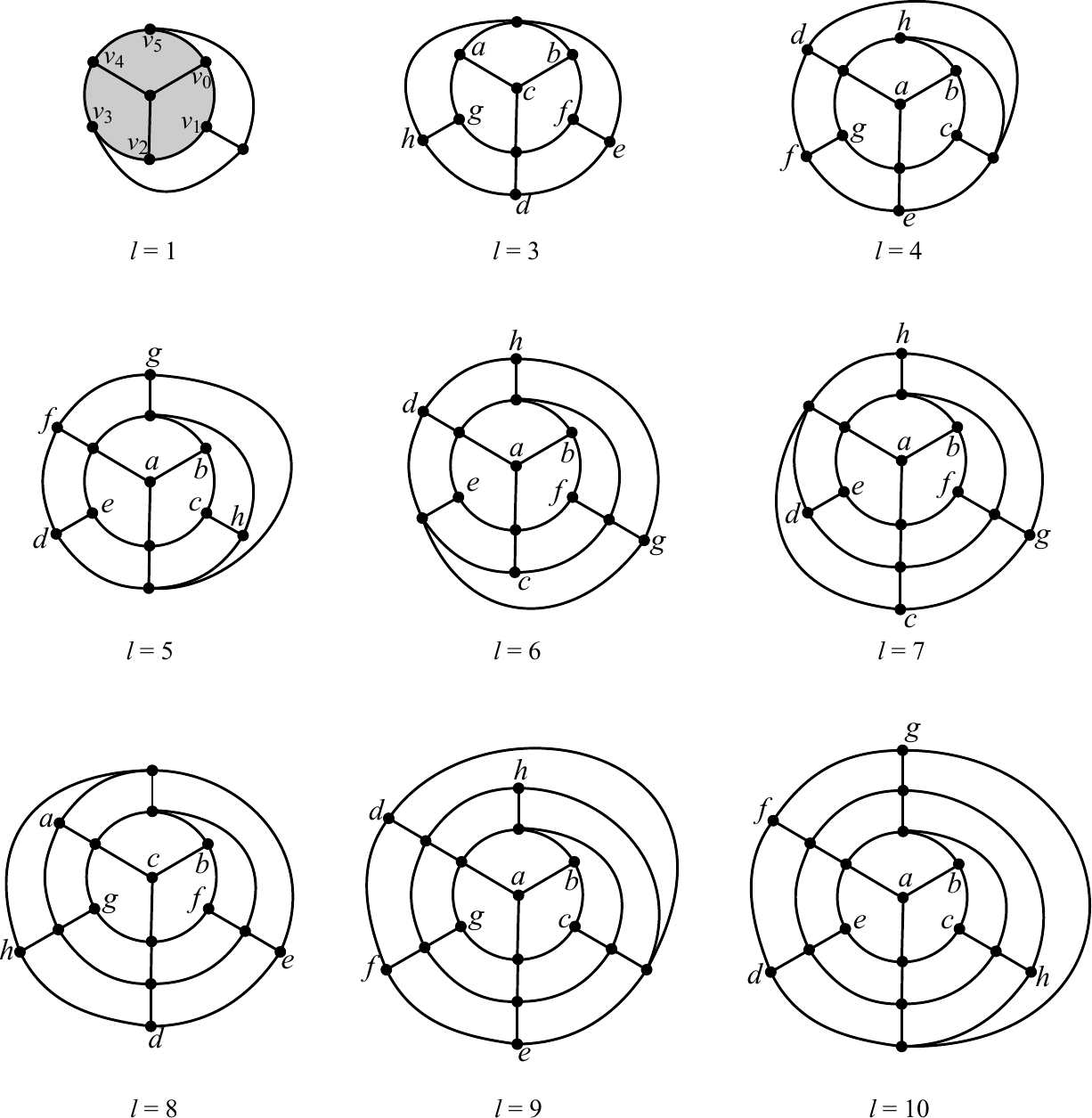}
\end{center}
\caption{An example of applying the spiral construction for various values of $l$. The original disk, with the vertex labeling used in the construction, is shown for the case $l=1$. The vertex labelings for cases $l\geq 3$ are provided for comparison with with Figure \ref{fig:periodic_extractions}. Note that the case $l=2$ violates one of the requirements for the construction.}
\label{fig:spiral-example}
\end{figure}

\medskip
\noindent {\bf Step I.\ } For $i=0, 1, \ldots, l-1$ add a new vertex $v_{n+i}$ and new edges $v_{i+1}v_{n+i}$ and $v_{n+i-1}v_{n+i}$. 

\medskip
\noindent {\bf Step II.\ } For $j=0,1, \ldots, n/2-3$ add edges $v_{l+n-1-j}v_{l+2+j}$. 

\medskip
Figure \ref{fig:spiral-generic} depicts the spiral construction when $l<n/2$, and Figure \ref{fig:spiral-example} shows a specific instance of the spiral construction. Given a quadrangulated immersion $\Gamma$ of a disk $D$ with a valid choice of cyclic labeling of the vertices on $\partial D$, for each valid choice of $l$ we write $S_l(\Gamma)$ for the spherical embedding resulting from applying the spiral construction with the given choices. 

\begin{prop} \label{pr:spiral_is_valid}
For any quadrangulated immersion $\Gamma$ of a disk $D$ with a valid choice of cyclic labeling of the vertices on $\partial D$ and any valid choice of $l>0$, the embedding $S_l(\Gamma)$ is a quadrangulated immersion. 
\end{prop}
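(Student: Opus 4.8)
The plan is to verify the three defining properties of a cubic quadrangulation for $S_l(\Gamma)$: that the surface obtained is still the sphere $S^2$, that every face of the resulting embedding is a quadrangle, and that every vertex has degree $3$ or $4$. Throughout, I would keep careful track of the boundary cycle of $D$ and how Steps I and II alter the degrees of the vertices $u_0, \dots, u_{n-1}$ and the faces incident to $\partial D$.

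First I would analyze Step I. Adding the vertices $v_{n+i}$ and edges $v_{i+1}v_{n+i}$, $v_{n+i-1}v_{n+i}$ for $i = 0,\dots,l-1$ attaches a ``strip'' of quadrilaterals along part of $\partial D$: the new face with corners $v_i, v_{i+1}, v_{n+i}, v_{n+i-1}$ (indices read appropriately, with $v_{n-1} = u_{n-1}$) is a $4$-cycle, so each added face is a quadrangle, and the outer face is still a disk whose boundary is a cycle. Each vertex $v_1, \dots, v_{l}$ on the old boundary gains exactly one new edge, so the degree-$2$ corner $u_{n-1}$ and the degree-$3$ vertex $u_0$ constraints are exactly what guarantee no vertex is pushed above degree $4$; the new vertices $v_{n+i}$ have degree $3$ (or degree $2$ at the ends of the strip, which is fine since they will lie on the new boundary). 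I would record precisely which vertices now have degree $2$ on the new boundary — this is where the hypothesis that $v_{l+1}, v_{l+n/2}, v_{l+n/2+1}$ have degree $3$ when $l < n/2$ gets used, since those are the vertices that Step II will attach edges to.

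Next I would analyze Step II. The edges $v_{l+n-1-j}v_{l+2+j}$ for $j = 0, \dots, n/2-3$ are exactly a transverse extension across the remaining outer region, ``zipping up'' the boundary cycle into a sequence of quadrilaterals and capping off the sphere; I would check by a direct index count that the $j$-th and $(j{+}1)$-th edges together with two boundary edges bound a $4$-cycle, that the last edge ($j = n/2-3$) closes off a final quadrangle, and that no vertex receives more than the allowed number of new incidences — again the degree-$2$-on-the-new-boundary bookkeeping from Step I is what makes this come out to degrees $3$ or $4$. Since we started from a disk (a sphere with one face removed) and every modification either subdivided a disk-face into quadrilaterals or glued a quadrangulated strip onto the boundary, the total space remains $S^2$ and every face is a quadrangle, and by Proposition~\ref{pr:b+i=4}-style counting (or the already-established fact that $\nu_3 = 8$) the degree constraints are consistent.

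The main obstacle will be the careful index arithmetic in Step II together with the case split $l < n/2$ versus $l \geq n/2$: one has to confirm that the edges added in Step II always go between vertices that are currently of degree $\leq 3$ and on the current outer boundary, that consecutive added edges really do cut off quadrangles (no off-by-one error at either end of the range of $j$), and that when $l < n/2$ the extra degree-$3$ hypotheses on $v_{l+1}, v_{l+n/2}, v_{l+n/2+1}$ are exactly sufficient to prevent a degree-$5$ vertex at the ``seam'' where the Step I strip meets the Step II fan. I expect this to be a finite, if slightly fiddly, verification best organized by drawing the generic picture (Figure~\ref{fig:spiral-generic}) and checking the boundary walk vertex by vertex; once the boundary degrees after Step I are tabulated, the quadrangularity of every new face is immediate from the $4$-cycle structure of each added face.
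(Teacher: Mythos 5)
Your overall strategy is the same as the paper's: track, step by step, how Steps I and II change vertex degrees and faces, and check that every new face is a quadrangle and every final degree is $3$ or $4$. However, you have the mechanism of the key hypotheses backwards, and since the entire content of the proof is this degree bookkeeping, that is a genuine gap. You assert that $v_{l+1}$, $v_{l+n/2}$, $v_{l+n/2+1}$ ``are the vertices that Step II will attach edges to'' and that the degree-$3$ hypotheses are ``exactly sufficient to prevent a degree-$5$ vertex at the seam.'' In fact the opposite is true: the Step II edges $v_{l+n-1-j}v_{l+2+j}$, $j=0,\dots,n/2-3$, touch exactly the boundary vertices with indices in $\{l+2,\dots,l+n/2-1\}\cup\{l+n/2+2,\dots,l+n-1\}$, so $v_{l+1}$, $v_{l+n/2}$, and $v_{l+n/2+1}$ are precisely the vertices that receive \emph{no} new edge in Step II. The vertex $v_{l+1}$ is absorbed into the interior by the first Step II quadrangle with its degree unchanged, and $v_{l+n/2}$, $v_{l+n/2+1}$ end up on the boundary of the final quadrangle with their degrees unchanged; the degree-$3$ hypotheses are therefore \emph{lower}-bound protections, preventing these vertices from ending up at degree $2$. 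The same is true of the requirement $\deg(u_0)=3$: at $i=0$ of Step I, $v_0$ becomes an interior vertex of the quadrangle $v_{n-1}v_0v_1v_n$ without a degree increase, so the hypothesis keeps it from being a degree-$2$ interior vertex, not from exceeding degree $4$. (Only the requirement $\deg(u_{n-1})=2$ plays the upper-bound role you describe.)

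A verification carried out under your stated expectation would not close: you would be checking that the favored vertices do not exceed degree $4$ when the actual danger is that they fall short of degree $3$. The fix is to reorganize the bookkeeping around two separate failure modes: (i) vertices that receive new edges must start at degree at most $3$ at the moment they receive them (this is where $\deg(u_{n-1})=2$ and the fact that each boundary vertex gains at most one edge per step are used), and (ii) vertices that are swallowed into the interior, or stranded on the final face, without receiving a new edge must already have degree $3$ (this is where $\deg(u_0)=3$ and, when $l<n/2$, the hypotheses on $v_{l+1}$, $v_{l+n/2}$, $v_{l+n/2+1}$ are used; when $l\ge n/2$ these three vertices are guaranteed degree $3$ by Step I itself). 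The face-counting part of your plan (each increment of $i$ or $j$ cuts off one new quadrangle and shortens the outer boundary appropriately, with the last Step II edge leaving a final quadrangular face) is correct and matches the paper.
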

\begin{proof}
We first analyze Step I. We begin with all $n$ vertices on $\partial D$ of degree 2 or 3. With each increment of $i$, a new quadrangle is added to the exterior of the disk, increasing the degree of $v_{i+1}$ and $v_{n+i-1}$ by 1, so each is now of degree 3 or 4, whereas the new vertex $v_{n+i}$ has degree 2. In the initial case $i=0$ the degree of $v_0$ has not been changed, which is precisely why the vertex labeling must be chosen so that $v_0$ begins with degree 3. Note that, for each increment of $i$, no interior vertices are modified, so all are of degree 3 or 4, and all vertices on the boundary are of degree 2 or 3, with the possible exception of $v_i$, which may have degree 4. Similarly, no existing interior faces are modified, so all interior faces are quadrangles. At the end of Step I the boundary still has length $n$, as the loss of vertex $v_0$ is balanced by the gain of vertex $v_{n+l-1}$. 

We now analyze Step 2. For each increment of $j$, the edge $v_{l+n-1-j}v_{l+2+j}$ creates a new quadrangle. Because $v_{l+n-1-j}$ and $v_{l+2+j}$ were on the boundary of the disk at the end of Step 1 and neither is $v_l$, they each had degree 2 or 3 previously, and thus each has degree 3 or 4 now. The vertex $v_l$ is now interior, so it is not an issue that it might have degree 4. In the initial case $j=0$ the vertex $v_{l+1}$ is incorporated into the interior via the addition of the new face without increasing its degree. This is not an issue, since if $l<n/2$ we require that it have degree 3, and if $l\geq n/2$ it is guaranteed  that $v_{l+1}$ will have degree 3 at the conclusion of Step 1. 

At each increment of  $j$, the boundary of the disk becomes shorter by 2 edges. Since our graph is bipartite the original boundary length, $n$, is even, and thus Step 2 continues until the boundary has only vertices  $$v_{l+n/2-1},\, v_{l+n/2},\, v_{l+n/2+1},\, v_{l+n/2+2}.$$ Because of the final increment of $j$, {\it i.e.}, $j=n/2 - 3$, the vertices  $v_{l+n/2-1}$ and $v_{l+n/2+2}$ each have degree 3 or 4. The degrees of $v_{l+n/2}$ and $v_{l+n/2+1}$ are not changed in the course of Step II, but again this is not an issue since  if $l<n/2$ we require that they each have degree 3, and if $l\geq n/2$ it is guaranteed  that they will have degree 3 at the conclusion of Step 1. 
\end{proof}
In light of Proposition \ref{pr:spiral_is_valid} and Proposition \ref{pr:one_crossing}, the example in Figure \ref{fig:spiral-example} verifies Corollary \ref{cor:all_counts}.
\begin{corollary} \label{cor:all_counts}
For every $m\in\N_{\geq 0}$ with $m \neq 1$ , there is a quadrangulated immersion with $8+m$ vertices.  
\end{corollary}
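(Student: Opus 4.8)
The plan is to produce, for every $m\in\N_{\geq 0}$ with $m\neq 1$, an explicit cubic quadrangulation on $8+m$ vertices. Only the sporadic case $m=0$ is handled by hand; all $m\geq 2$ are produced uniformly by a single application of the spiral construction, as illustrated in Figure \ref{fig:spiral-example}. (The omitted value $m=1$ is exactly the one ruled out by Proposition \ref{pr:one_crossing}, so nothing needs to be shown there.)

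For $m=0$ I would take the graph of the $3$-cube with its standard cellular embedding in $S^2$: all eight vertices have degree $3$ and all six faces are quadrangles, so this is a cubic quadrangulation with $8$ vertices. For $m\geq 2$ I would fix $\Gamma$ to be the irreducible cubic quadrangulation of a disk with a single interior vertex, namely the one on the bottom row of Figure \ref{fig:disks}, to the right of center; it has $n=6$ boundary vertices whose degrees alternate $3,2,3,2,3,2$ together with one interior vertex, so $|V(\Gamma)|=7$. I would label the boundary cyclically as $v_0,\dots,v_5$ with $v_0$ of degree $3$ and $v_5$ of degree $2$, which is a valid labeling for the spiral construction, and observe that since $n/2=3$ every integer $l\geq 3$ is a valid value of the parameter (the additional degree conditions are imposed only when $l<n/2$). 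Applying the spiral construction with $l=m+1\geq 3$ then yields, by Proposition \ref{pr:spiral_is_valid}, a cubic quadrangulation $S_{m+1}(\Gamma)$; since Step I introduces exactly $l$ new vertices and removes none while Step II introduces only edges, $S_{m+1}(\Gamma)$ has $|V(\Gamma)|+l=7+(m+1)=8+m$ vertices. The cube together with this family realizes every order in $\{8\}\cup\{10,11,12,\dots\}$, which is precisely $\{\,8+m : m\in\N_{\geq 0}\setminus\{1\}\,\}$.

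Apart from invoking Proposition \ref{pr:spiral_is_valid}, the only content is routine bookkeeping: that $\Gamma$ with the chosen labeling satisfies the input hypotheses of the spiral construction; that every $l\geq 3$ is a valid parameter for this $\Gamma$, which is immediate since the sole constraint on a valid $l$ becomes vacuous once $l\geq n/2$; and that $|V(S_l(\Gamma))|=|V(\Gamma)|+l$, which is read off directly from Step I. The one place I would double-check is that Proposition \ref{pr:spiral_is_valid} is being used with $l$ unbounded, in particular $l\geq n$, where the ring of quadrangles added in Step I winds around $\Gamma$ more than once before Step II closes it off; but this is covered by Proposition \ref{pr:spiral_is_valid} as stated, and a count of faces ($3+l+2=l+5=6+\nu_4$) and of edges confirms consistency. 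This bookkeeping --- not any conceptual difficulty --- is the main, and only minor, obstacle.
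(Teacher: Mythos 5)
Your proof is correct and follows essentially the same route as the paper, which likewise verifies the corollary by pointing to a spiral-construction family (the one in Figure \ref{fig:spiral-example}) together with Proposition \ref{pr:spiral_is_valid}. The only cosmetic differences are that you realize $m=0$ by the cube rather than by a small value of $l$ in the same family, and that you spell out the bookkeeping ($|V(S_l(\Gamma))|=|V(\Gamma)|+l$ with $|V(\Gamma)|=7$ for the disk with a single interior vertex) that the paper leaves implicit.
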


Although the sequence $S_l(\Gamma)$ is infinite, it only yields quadrangulated immersions of finitely many cubic graphs. 
\begin{prop} \label{pr:spiralperiodicity}
For any quadrangulated immersion $\Gamma$ of a disk $D$ with a valid choice of cyclic labeling of the $n$ vertices on $\partial D$ and any valid choice of $l>1$, we have $\epsilon(S_l(\Gamma)) = \epsilon(S_{l+n-1}(\Gamma))$.
\end{prop}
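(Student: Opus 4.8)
Since $\nu(S_l(\Gamma))=\nu(\Gamma)+l$ (Step I adds $l$ vertices and Step II adds none), the embedding $S_{l+n-1}(\Gamma)$ has exactly $n-1$ more vertices than $S_l(\Gamma)$, so the two are not isomorphic, and the content of the proposition is that the extra winding of the spiral is invisible to the extraction. The plan is to show that passing from $S_l(\Gamma)$ to $S_{l+n-1}(\Gamma)$ amounts to inserting a single closed transverse walk, and then to invoke the general fact that such an insertion does not change the extracted graph.

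I would begin by writing out the prescriptions of Steps I and II for parameter $l+n-1$ and shifting every index by $n-1$; this identifies the disk $D$, all but $n-1$ quadrangles of the Step-I band, and the Step-II cap of $S_{l+n-1}(\Gamma)$ with the corresponding pieces of $S_l(\Gamma)$, leaving over exactly one extra lap of $n-1$ quadrangles wound around $D$. The hypothesis $l>1$ enters precisely here: only once the band has already wound around $D$ do these extra quadrangles close up, so that $S_{l+n-1}(\Gamma)$ contains a closed transverse walk $W$ of length $n-1$ winding once around the extra lap, whose deletion — removing the edges of $W$ and smoothing the resulting degree-2 vertices, exactly as in the definition of reduction — collapses the extra lap and recovers $S_l(\Gamma)$.

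It then remains to observe that deleting a closed transverse walk $W$ never changes the extracted graph. Every vertex of $W$ has degree $4$, so no transverse path between odd-degree vertices can use an edge of $W$; and at each vertex $v$ of $W$ the two edges not lying on $W$ are non-consecutive in the rotation (they occupy the two rotation slots unused by $W$), so the vertex obtained by smoothing $v$ simply splices together the transverse path, if any, that passed through $v$. Thus transverse-path adjacency among the odd-degree vertices is preserved, giving $\epsilon(S_{l+n-1}(\Gamma))=\epsilon(S_l(\Gamma))$.

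The main obstacle is the middle step: exhibiting $W$ explicitly and verifying that deleting it reproduces $S_l(\Gamma)$ exactly. The bookkeeping is delicate because the wrap-around portion of Step I and the cap built in Step II interact, so the eight degree-3 vertices do not sit still as $l$ grows — informally, four of them migrate along the band and cap, balanced by four former degree-3 vertices being promoted to degree $4$. If isolating a single $W$ proves awkward, the fallback is to bypass reduction and instead construct an explicit bijection between the degree-3 vertices of $S_l(\Gamma)$ and those of $S_{l+n-1}(\Gamma)$ — those inherited from $D$ matched to themselves, those on the band and cap matched by their position relative to the cap — and then to check, transverse path by transverse path, that a path joining two of them in $S_l(\Gamma)$ corresponds, after rerouting once around the extra lap, to a path joining the matched pair in $S_{l+n-1}(\Gamma)$, and conversely. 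Either way, the crux is controlling how transverse paths traverse the spiralling band.
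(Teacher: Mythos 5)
Your main line of argument does not get off the ground: $S_{l+n-1}(\Gamma)$ is \emph{not} obtained from $S_l(\Gamma)$ by inserting a closed transverse walk, because the ``extra lap'' of the band is not a closed transverse walk. At each band vertex $v_{n+i}$ the rotation alternates radial and longitudinal edges, so the transverse pairs are (radial--in $v_{i+1}v_{n+i}$, radial--out $v_{n+i}v_{2n+i-1}$) and (longitudinal--back $v_{n+i-1}v_{n+i}$, longitudinal--forward $v_{n+i}v_{n+i+1}$). Hence all longitudinal edges of the band lie on a \emph{single} open transverse path that spirals through every lap (one lap is $n-1$ quadrangles, and the spiral's pitch prevents it from closing up after $n-1$ steps), while each radial edge lies on an open transverse path running from $D^\circ$ out to the cap, gaining exactly one crossing per additional lap. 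So every edge created by Steps I and II lies on a complete transverse path between degree-3 vertices; there is no closed transverse walk in $S_{l+n-1}(\Gamma)$ whose deletion could return $S_l(\Gamma)$, and the reduction step you propose cannot be performed. (Your final observation --- that deleting a genuine closed transverse walk preserves the extraction --- is correct, and the vertex counts do match, but that is bookkeeping, not structure: the two embeddings have different transverse decompositions, one long spiral path versus a shorter spiral plus a separate closed component.)

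Your fallback is the correct proof and is the one the paper gives: define an explicit bijection $\varphi$ on the eight degree-3 vertices --- the identity on those inside $D$ and on $\partial D$ (including $u_0=v_0$), and the index shift $u_j\mapsto v_{j+n-1}$ on the four cubic vertices $u_{l+1},u_{l+n/2},u_{l+n/2+1},u_{l+n-1}$ created outside $D$ --- and then verify the correspondence of complete transverse paths class by class: the radial paths leaving $\partial D$, the edges $u_{n+i-1}u_{n+i}$ of the spiral, and the cap edges $u_{l+n-1-j}u_{l+2+j}$, each of which in $S_{l+n-1}(\Gamma)$ picks up exactly one extra crossing from the additional lap (e.g.\ the cap edge becomes the path $v_{l+n-1-j}v_{l+2n-2-j}v_{l+n+1+j}v_{l+2+j}$). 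But in your write-up this verification is exactly what you defer as ``the crux,'' and that crux is the entire content of the proposition. As it stands, the proposal consists of a primary argument that fails and a correct alternative that is not carried out; to complete it you must execute the path-by-path matching you postpone.
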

\begin{figure}
\begin{center}
\includegraphics{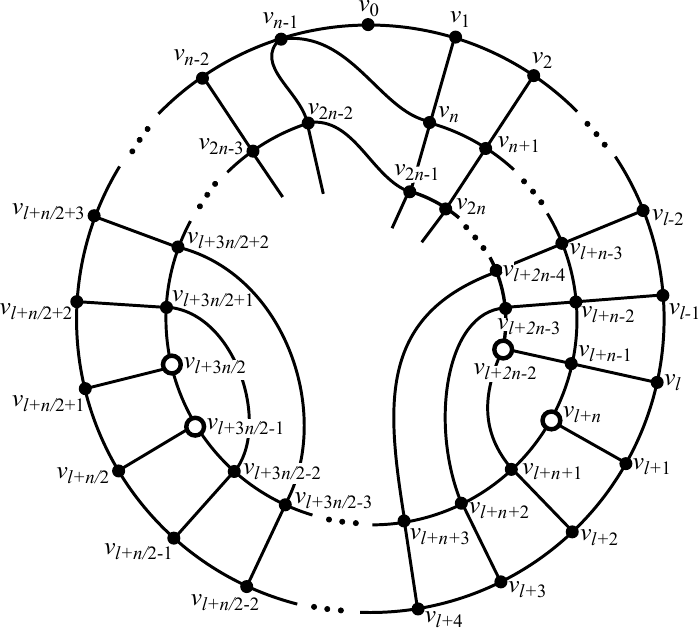}
\end{center}
\caption{The result of applying Steps I and II to obtain $S_{l+n-1}(\Gamma)$; the interior of the original disk, which occupies the outer face, is not shown. The degree 3 vertices outside the original disk are shown enlarged with white centers. }
\label{fig:periodicity_proof}
\end{figure}
\begin{proof}
Since this is a periodicity result, it suffices to prove it for values $l < n$. We will define a graph isomorphism $\varphi\colon \epsilon(S_l(\Gamma)) \to \epsilon(S_{l+n-1}(\Gamma))$; for ease of notation we will write $u_i$ for vertices of $S_l(\Gamma)$ and $v_i$ for vertices of $S_{l+n-1}(\Gamma)$, understanding that for some values of $i$ one can reasonably assert that $u_i=v_i$. 

The interior of the original disk $D$ contains some number of degree 3 vertices,  and any vertices on $\partial D$ which originally had degree 2 now have degree 3 in both $\epsilon(S_l(\Gamma))$ and $S_{l+n-1}(\Gamma)$; on all these vertices we define $\varphi$ to be the identity. Additionally, we define $\varphi$ to map  the degree 3 vertices 
$$
u_0, u_{l+n/2}, u_{l+n/2+1}, u_{l+1}, \mbox{ and } u_{l+n-1},
$$
in  $\epsilon(S_l(\Gamma))$ to the degree 3 vertices
$$
v_0, v_{l+3n/2 - 1}, v_{l+3n/2}, v_{l+n}, \mbox{ and } v_{l+2n-2},
$$
in  $\epsilon(S_{l+n-1}(\Gamma))$, respectively. 

From Figure \ref{fig:periodicity_proof} it is not difficult to see that there is now 
 a correspondence of complete transversal paths in $S_l(\Gamma)$ with complete transversal paths in $S_{l+n-1}(\Gamma)$ which is consistent with $\varphi$. In particular, we have the following correspondences:
 
 \medskip
 \begin{tabular}{rcl}
 the complete transversal  &  &  the complete transversal  \\ 
path in $S_l(\Gamma)$ containing & & path in $S_{l+n-1}(\Gamma)$ containing \\\cline{1-1}\cline{3-3} \\
 
 vertex  $u_{i+1}$ but no edge in $\partial D$ & $\leftrightarrow$ & edge $v_{i+1}v_{n+i}$ (for $0 \leq i < l-1$)\\[5pt]

edge $u_{n+i-1}u_{n+i}$ & $\leftrightarrow$ & edge $v_{2n-2+i}v_{2n-1+i}$  (for $0 \leq i < l$)\\[5pt]
edge $u_{l+n-1-j}u_{l+2+j}$   & $\leftrightarrow$ &  path \\
& & $v_{l+n-1-j}v_{l+2n-2-j}v_{l+n+1+j}v_{l+2+j}$\\
& & (for $0 \leq j \leq n/2-3$)
\end{tabular}
\end{proof}

Figure \ref{fig:periodic_extractions} shows the extracted graphs obtained from the spiral construction family shown in Figure \ref{fig:spiral-example}.
\begin{figure}
\begin{center}
\includegraphics[width=.8\textwidth]{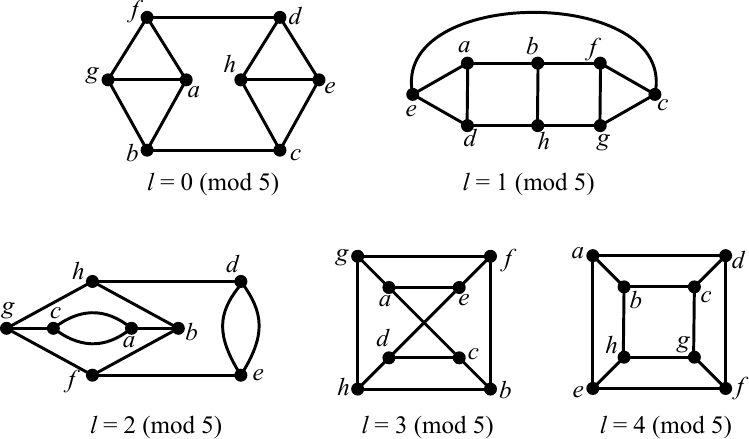}
\end{center}
\caption{The extractions of the examples shown in Figure \ref{fig:spiral-example}, with corresponding vertex labelings.}
\label{fig:periodic_extractions}
\end{figure}

\subsection{The cable construction}

In this section we present a construction that is similar in flavor to the spiral construction, but exhibits somewhat more flexibility. 

Suppose $G$ is a quadrangulated immersion, $W^*$ is a  walk in the dual embedding $G^*$, and $e$ is an edge of $G$ corresponding to a dual edge $e^*$ occurring in $W^*$. Let $f$ denote the face the walk $W^*$ arrives at after crossing $e^*$, and label the edges of $\partial f$ as shown in Figure \ref{fig:turns}.
\begin{figure}
\begin{center}
\includegraphics{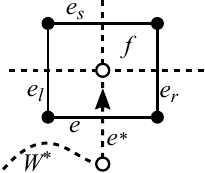}
\end{center}
\caption{Labeling the edges of $\partial f$.}\label{fig:turns}
\end{figure}
We say that $W^*$ {\it turns to the right after edge $e$} if after crossing $e$ the next edge in $W^*$  is the dual edge $e_r^*$. Similarly we say that $W^*$ {\it continues straight after $e$} or {\it turns left after $e$} if the next edge in $W^*$ is the dual edge $e_s^*$ or $e_l^*$, respectively.  

Let $G$ be a quadrangulated immersion, and let  $W^*$ be a walk in $G^*$ having $2k$ turns which alternate right and left and further require that if $v^*$ is a vertex at which $W^*$ has four edges, then $W^*$ has two turns at $v^*$. We refer to a walk satisfying these conditions as a {\it cabling walk.} Number the edges of $W^*$ in order as $e_1^*, e_2^*, \ldots, e_n^*$ (where indices are taken modulo $n$), and let $r_1 < l_1 < r_2 < l_2 < \cdots < r_k < l_k$ be indices such that, for each $i$, $W^*$ turns to the right after crossing edge $e^*_{r_i}$, and then turns to the left after crossing edge $e^*_{l_i}$. For each dual edge $e_i^*$ we think of the edge $e_i$  in $G$ as oriented left to right relative to the direction (thought of as ``straight'') that $W^*$ traverses $e_i$, and write $u_{i,1},u_{i,2}$ for the source of $e_i$ and target of $e_i$, respectively. Let 
$$I_R := \{r_1, r_2, \ldots, r_k\}, \ \ I_L:= \{ l_1, l_2, \ldots, l_k\},$$
and
$$ I := \bigcup_{i=1}^k\{r_i+1,r_i+2, \ldots, l_i-1\}.$$ 

Given a number $c \geq 1$, we apply the following {\it cable construction} to $G$ to obtain a new quadrangulated immersion $\C(G,W^*,c)$. We describe each step in such a way that it is clear not only what happens combinatorially, but also how the vertices and edges should be placed topologically. In particular, no intersections are introduced except at new vertices, as specified. 
\begin{enumerate}
\item \label{step:vert_I-IL} For each  $i\in I\cup I_L$ add $c-1$ new vertices $v_{i,1}, v_{i,2}, \ldots, v_{i,c-1}$ to the edge $e_i$, arranged from left to right. 

\item \label{step:vert_notI-IL} For $i\not\in I\cup I_L$ add $c$ new vertices $v_{i,1}, v_{i,2}, \ldots, v_{i,c}$ to the edge $e_i$, arranged from left to right.

\item \label{step:edge_notI-IR-IL} For  $i \not\in I \cup I_R \cup I_L$ and each $j=1, 2, \ldots, c$ add an edge $v_{i,j}v_{i+1,j}$. 

\item \label{step:edge_I} For  $i \in I$ add the edge $v_{i,1}u_{i+1,1}$, the edges $v_{i,j}v_{i+1,j-1}$ for  $j=2, \ldots, c-1$, and the edge $u_{i,2}v_{i+1,c-1}$.

\item \label{step:edge_IR} For each $r\in I_R$  add the   edge $v_{r,1}u_{r+1,1}$ and the edges $v_{r,j}v_{r+1,j-1}$ for  $j=2, \ldots, c$.

\item \label{step:edge_IL} For each $l\in I_L$ add the edges $v_{l,j}v_{l+1,j}$ for $j=1, \ldots, c-1$ and the edge $u_{l,2}v_{l+1,c}$.

\item \label{step:smooth} For each  $i\in I\cup I_L$ delete the edges $u_{i,1}v_{i,1}$, $v_{i,j}v_{i,j+1}$ for $j=1, 2, \ldots, c-2$, and also $v_{i,c}u_{i,2}$, and then smooth all vertices now having degree 2.
\end{enumerate}

For convenience, we define $\C(G,W^*,0)$ to be $G$ itself. See Figure \ref{fig:cabling_example} for examples of the cabling construction.
\begin{figure}
\begin{center}
\includegraphics[width=.97\textwidth]{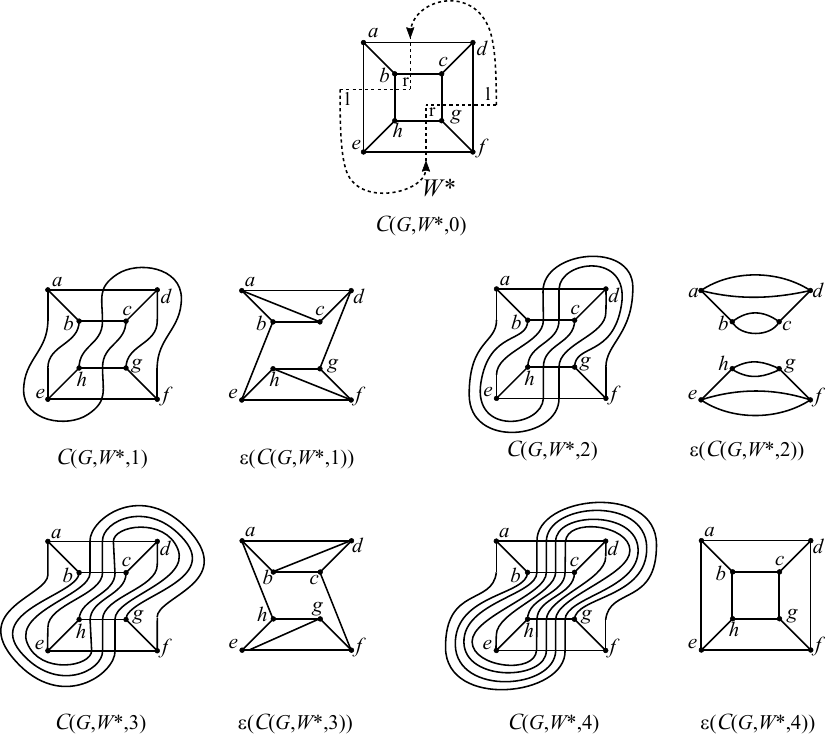}
\end{center}
\caption{On top, a quadrangulated immersion $G$ with a choice of cabling walk $W^*$. Below are examples of the cabling construction for $c=1,2,3,4$, along with the corresponding extracted cubic graph.}
\label{fig:cabling_example}
\end{figure}

\begin{theorem}\label{thm:cable}
Suppose $G$ is a quadrangulated immersion and  $W^*$ is a cabling walk in $G^*$. For any natural number $c$ the embedding $\C(G,W^*,c)$ is a quadrangulated immersion, and for any $c$ we have 
$$\epsilon\left(\C(G,W^*,c+|I_R\cup I|)\strut\right) = \epsilon\left(\C(G,W^*,c)\strut\right) .$$
\end{theorem}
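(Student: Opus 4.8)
The plan is to prove the two assertions of Theorem~\ref{thm:cable} in sequence, the first by a local degree/face check closely modeled on the proof of Proposition~\ref{pr:spiral_is_valid}, and the second by exhibiting an explicit isomorphism of extracted graphs analogous to the map $\varphi$ used in the proof of Proposition~\ref{pr:spiralperiodicity}. For the first assertion, I would work through Steps (1)--(7) of the cable construction and verify that at no stage does any interior vertex acquire a degree outside $\{3,4\}$ or any face fail to be a quadrangle. The key observations are: the ``straight'' edges $v_{i,j}v_{i+1,j}$ added in Steps (3), (5), (6) create a $c$-layer grid of new quadrangles running parallel to $W^*$; at a right turn $r\in I_R$ the index of the adjacent layer shifts down by one (Step (5)), at a left turn $l\in I_L$ it shifts back up (Step (6)), and over the stretch $i\in I$ between a right and the following left turn the layers are ``rotated'' by Step (4), with Step (7) then deleting the now-redundant boundary strand of $e_i$ and smoothing degree-2 vertices so that the net effect is a coherent tube of width $c$. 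The alternating right/left condition guarantees these shifts cancel, and the requirement that $W^*$ turns twice at any degree-4 dual vertex is exactly what keeps the original degree-4 vertices of $G$ from being pushed past degree 4. The endpoints of each added edge are always vertices that had degree $2$ or $3$ before that edge was added, so they land at degree $3$ or $4$; new interior vertices $v_{i,j}$ start at degree 2 and reach degree 3 or 4 once their neighbors on both sides are installed. This is routine but somewhat tedious bookkeeping, and it is where most of the writing will go.

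For the periodicity statement, set $p := |I_R\cup I| = |I_R| + |I| = k + \sum_{i=1}^k(l_i - r_i - 1)$ and compare $\C(G,W^*,c)$ with $\C(G,W^*,c+p)$. I would build an isomorphism $\psi\colon \epsilon(\C(G,W^*,c+p)) \to \epsilon(\C(G,W^*,c))$ as the identity on all vertices of $\epsilon(G)$ (equivalently, on the degree-3 vertices of $G$, which persist unchanged in every $\C(G,W^*,\cdot)$), together with a bijection between the finitely many new cubic vertices created in the $c+p$ case and those created in the $c$ case. The combinatorial content is that running the construction with $c+p$ layers produces, outside a bounded region, exactly the same pattern of complete transverse paths as running it with $c$ layers, because $p$ additional layers wrap once all the way around the cabling walk: the ``extra'' $p$ strands enter through the turns accounted for by $I_R$ and $I$ and re-emerge after one full circuit, and the correspondence of complete transverse paths should be laid out in a table just as in the proof of Proposition~\ref{pr:spiralperiodicity} (paths in the $c+p$ embedding that stay within one layer correspond to paths in the $c$ embedding; paths that use one of the $p$ winding strands correspond to paths built from $\epsilon(G)$-edges shifted along $W^*$). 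One then checks that this correspondence is consistent with $\psi$ on endpoints, i.e.\ that incidences of complete transverse paths with cubic vertices match up, which certifies $\psi$ is a graph isomorphism.

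The main obstacle I expect is the second part: pinning down precisely how the $p$ winding strands are threaded through the right turns and the ``rotation'' stretches $I$, and writing the complete-transverse-path correspondence in closed index form (the cable construction's indexing is considerably more intricate than the spiral's, since it involves three disjoint index sets $I_R$, $I_L$, $I$ and a second layer index $j$). A clean way to manage this is to first treat the case $k=1$ with a single right turn and a single left turn---here $W^*$ is essentially an annular band and the argument is transparent---and then observe that the general case is obtained by concatenating such stretches, since the left turns $l_i$ reset the layer index and decouple consecutive right-turn blocks. I would also lean on Figure~\ref{fig:cabling_example} and a schematic figure analogous to Figure~\ref{fig:periodicity_proof} to carry part of the load, as the authors do for the spiral construction. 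The first assertion, by contrast, is safely a matter of careful case-checking along the lines already established in Proposition~\ref{pr:spiral_is_valid}.
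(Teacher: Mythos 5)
Your overall strategy matches the paper's proof: a local degree-and-face verification for the first assertion, and for periodicity a tracking of complete transverse paths showing that $|I_R\cup I|$ extra layers amount to one full circuit of $W^*$, so that each transverse path returns to the same endpoint. Two points in your proposal are genuinely off, however, and would cause trouble if you wrote it out as described. First, there are no ``new cubic vertices created'' by the cable construction: every vertex $v_{i,j}$ that survives Step (7) has degree $4$, and the degree-$3$ vertices of $\C(G,W^*,c)$ are exactly the degree-$3$ vertices of $G$, untouched for every $c$. So the vertex set of the extraction is literally fixed, the isomorphism is the identity, and the entire content of the periodicity claim is that the complete transverse paths join the \emph{same pairs} of these fixed vertices. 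Second, your proposed reduction to the case $k=1$ rests on the claim that the left turns ``reset the layer index and decouple consecutive right-turn blocks,'' and this is false: at $l\in I_L$ Step (6) joins layer $j$ of $e_l$ to layer $j$ of $e_{l+1}$, so a transverse path passes straight through a left turn without changing layer and keeps winding through subsequent blocks whenever $c$ is large enough. The paper handles this by following a single path $P_i=u_{i,1}v_{i-1,a_1}\cdots u_{i-m,2}$ whose layer index $a_j$ increments by $1$ exactly when it crosses an index of $I_R\cup I$; replacing $c$ by $c+|I_R\cup I|$ forces exactly $|I_R\cup I|$ additional increments, hence exactly one additional full traversal of all $n$ edges of $W^*$, so $i-m'\equiv i-m \pmod n$ and the endpoint is unchanged. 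That quantitative step is the heart of the argument, and your proposal only gestures at it; the decoupling shortcut will not substitute for it.
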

\begin{proof}
To show that $\C(G,W^*,c)$ is a quadrangulated immersion, is suffices to analyze only those part of $G$ that were changed. 

First, we confirm that all vertices are of degree 3 or 4. For $i \in I\cup I_L$ the vertices $v_{i,j}$ do not actually appear in   $\C(G,W^*,c)$, since they have been smoothed (Step \ref{step:smooth}). For $i\not\in I \cup I_L$, all new vertices $v_{i,j}$ have degree 4, as two incident edges arose from the subdivision of $e_i$ (Step \ref{step:vert_notI-IL}) and the additional two incident edges are $v_{i,j}v_{i+1,j}$ and $v_{i-1,j}v_{i,j}$ for $i\not\in I_R$ (Step \ref{step:edge_notI-IR-IL}), and for $i\in I_R$ they are $v_{i,j}v_{i+1,j-1}$ and $v_{i-1,j}v_{i,j}$ for $j\neq 1$ and  $v_{i,1}u_{i+1,1}$ and $v_{i-1,1}v_{i,1}$ for $j= 1$ (Step \ref{step:edge_IR}). Finally, for $i \in I\cup I_L$ the vertices $u_{i,1}$ and $u_{i,2}$ have the same degree in  $\C(G,W^*,c)$  as in $G$, because the edges $u_{i,1}v_{i,1}$ and $v_{i,c-1}u_{i,2}$ were deleted (Step \ref{step:smooth}) but edges $v_{i,1}u_{i+1,1}$ for $i\in I_R \cup I$ and edges $u_{i,2}v_{i+1,c}$ for $i\in I\cup I_L$ were added (Steps \ref{step:edge_I}, \ref{step:edge_IR}, \ref{step:edge_IL}).

Now we confirm that all faces are quadrangles. Write $f_i$ for the face having edges $e_i$ and $e_{i+1}$ in its boundary; since $W^*$ is a closed walk, these are exactly the faces changed in going from $G$ to $\C(G,W^*,c)$. Prior to Step \ref{step:smooth}, the face $f_i$ has been subdivided, left to right, into 
\begin{itemize}
\item $c+1$ new quandrangles, for $i\not\in I_R \cup I \cup I_L$ (Step \ref{step:edge_notI-IR-IL}). 
\item  a triangle, $c-1$ quadrangles, and another triangle, for $i \in I$ (Step \ref{step:edge_I});
\item $c$ quadrangles and a triangle, for $i\in I_R$ (Step \ref{step:edge_IR});
\item a triangle and $c$ quadrangles, for $i \in I_L$ (Step \ref{step:edge_IL}).
 \end{itemize}
In Step \ref{step:smooth}, one edge of each of the triangles mentioned above is deleted; with the smoothing of degree 2 vertices each becomes part of a quadrangle; see Figure \ref{fig:cabling_quadrangles}.
\begin{figure}
\begin{center}
\includegraphics{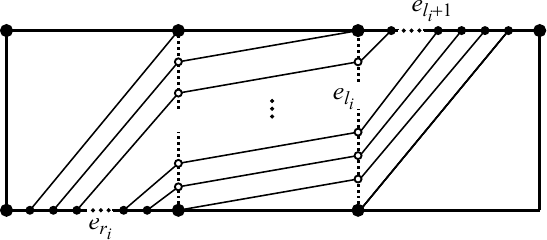}
\end{center}
\caption{An example showing the quadrangles created in faces $f_{r_i}$, $f_{r_i+1}$, \ldots, $f_{l_i}$. In Step \ref{step:smooth}, dashed edges are deleted and then white vertices are smoothed. }
\label{fig:cabling_quadrangles}
\end{figure}

Finally, to prove the statement about the extracted graphs, we study the complete transverse paths in $\C(G,W^*,c)$. Note first that the vertices $u_{i,1}$ and $u_{i,2}$, for $i=1, 2, \ldots, n$ are the only vertices of $G$ affected by the construction. For each $i\not\in I\cup I_L$, the edge $e_i$ from $u_{i,1}$ to $u_{i,2}$ in $G$ is replaced by the transverse path $u_{i,1}v_{i,1}v_{i,2}\cdots v_{i,c}u_{i,2}$, which does not affect the structure of the extracted graph regardless of the value of $c$.

For $i\in I\cup I_L$, prior to performing Step \ref{step:smooth} there is a transverse path $P_i = u_{i,1}v_{i-1,a_1}v_{i-2,a_2}\cdots v_{i-m+1,a_{m-1}} u_{i-m,2}$ where the sequence $\{a_j\}$ is defined by 
\begin{itemize}
\item $a_1 = 1$; 
\item for each $j$,   $a_{j+1}=a_j$ if $i-j-1 \not\in I_R\cup I$ whereas $a_{j+1} = a_j+1$ if $i-j-1 \in I_R \cup I$; and
\item $a_{m-1} = c-1$.
\end{itemize}
Furthermore, we necessarily have either $i-m+1 \in I \cup I_L$ or $i-m\in I_L$, since the last edge has $u_{i-m,2}$ as its second end. After Step 7, some of the intermediate vertices in $P_i$ will be gone, but $P_i$ will still be a transverse path from $u_{i,1}$ to $u_{i-m,2}$. Now note that, in $\C(G,W^*,c+|I_R\cup I|)$, the transverse path $P_i$ will be replaced by a transverse path $P'_i$ from $u_{i,1}$ to $u_{i-m',2}$ where $a_{m'-1} = c+ |I_R\cup I| - 1$. Since we have $a_{j+1} = a_j+1$ exactly when $i-j-1 \in I_R \cup I$, we see that $P'_i$ does a full traversal of all edges of $I_R \cup I$, and hence all $n$ edges of $W^*$, in addition to what occurred in $P_i$. It follows that $i-m' = i - m - n = i-m \mod n$, so $u_{i-m',2}=u_{i-m,2}$. 

We see that the transverse paths in $\C(G,W^*,c+|I_R\cup I|)$ connect the degree-3 vertices as in $\C(G,W^*,c)$, so we are done.
\end{proof}
Since the cabling walk in Figure \ref{fig:cabling_example} has $|I_R \cup I| = 4$, Theorem \ref{thm:cable}  tells us that the seqence of extracted cubic graphs depicted there repeats periodically according to the value of $c \mod 4$. 

Finally, we observe that Proposition \ref{pr:spiralperiodicity} immediately follows as a corollary of the last part of Theorem \ref{thm:cable}  when, in the notation of Proposition \ref{pr:spiralperiodicity}, we have $\ell\geq n$, but otherwise not.

\section{Computations} \label{sec:computations}

\subsection{Cubic multigraphs on 8 vertices} \label{subsec:cubic_multigraphs}
For the sake of study and cataloging of results, all cubic multigraphs on 8 vertices were generated. The count of connected cubic multigraphs on up to 24 vertices is given by Brinkmann {\it et al.} (they refer to these as connected cubic multigraphs with loops) \cite{BrinkmannEtAl13}. Here, we also provide drawings of all these graphs on up to 8 vertices. 

Computer search reaffirmed that there are exactly 71 connected cubic multigraphs on 8 vertices. Of these, the four which are 3-connected are shown in Figure \ref{fig:census_3connected}. All four of these graphs are simple; the fifth simple graph appears in the list below as, in the notation defined below, 4d$\myd$4d.

\begin{figure}
\begin{center}
\includegraphics[width=.85\textwidth]{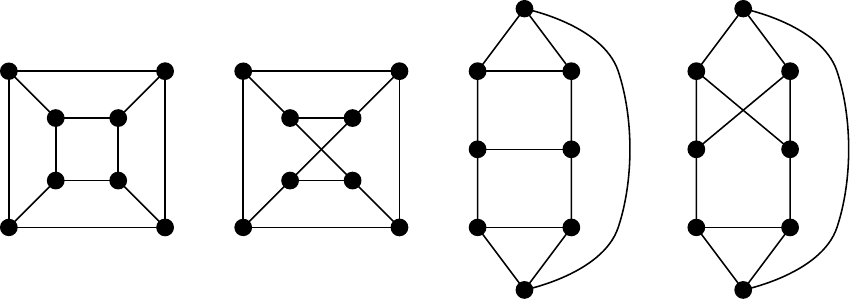}
\end{center}
\caption{The 3-connected cubic graphs on 8 vertices.}
\label{fig:census_3connected}
\vspace{.5in}
\end{figure}

To list the non-3-connected examples we use various cubic multigraphs on 2 to 6 vertices, as shown in Figures \ref{fig:pieces_2} -- \ref{fig:pieces_6}. These multigraphs are labeled in the form $nx$ where $n$ indicates the number of vertices and $x$ is an arbitrary alphabetic designator. Given  multigraphs $G,H$ with  a single half-edges each, we write $G\mys H$ to denote the result of attaching the half-edge of $G$ to the half-edge of $H$.  Given  multigraphs $G,H$ with 2 half-edges each, we write $G\myd H$ to denote the result of attaching the half-edges of $G$ to the half edges of $H$, respectively. How to pair the half-edges of $G$ with those of $H$ will not matter for us, as in all cases we list, one of $G$ or $H$ will be symmetric with respect to its half-edges. When $G_1$ and $G_2$ each have a single half-edge and $H$ has 2, we write $G_1\mys H\mys G_2$ to denote the result of the obvious attachments. When $G_1$ and $G_2$ each have two half-edges, we write $G_1\myd 4g \myd G_2$ to denote the result of attaching the two half-edges of $G_1$ to the left half-edges of multigraph 4g and attaching the half-edges of $G_1$ to the right half-edges of 4g. Brinkmann {\it et al.} identify various construction operations that are equivalent or analogous to our action of attaching \cite{BrinkmannEtAl13}.

The following is a complete list of representatives of the distinct isomorphism classes of all connected, non-3-connected cubic multigraphs on 8 vertices. 
\begin{itemize}
\item All multigraphs of the form $G\myd H$, where $G,H\in\{$4a, 4b, 4c, 4d, 4e, 4f$\}$, excluding the case 4f\myd 4f. In fact, there are two ways to connect 4f to itself. One is listed as 4a\myd 4e and the other is listed below as 6d\myd 2a. Note that 4d\myd 4d is the fifth simple connected cubic graph on 8 vertices. \smallskip 

\item All multigraphs of the form $G\myd H$ where $G\in\{$6a, 6b, 6c, 6d, 6e$\}$ and $H\in\{$2a, 2b, 2c$\}$, excluding 6a\myd 2a and 6f\myd 2a (which are listed above as 4c\myd 4c and 4c\myd 4f, respectively.)\smallskip

\item All multigraphs of the form $G_1\myd 4\mbox{g}\myd G_2$ where $G_1,G_2\in\{$2a, 2b, 2c$\}$, excluding 2a\myd 4g\myd 2a and 2b\myd 4g\myd 2b (which are listed above as 4e\myd 4e and 6b\myd 2a, respectively.)\smallskip

\item All multigraphs of the form $G\mys H$ where $G\in\{$5a, 5b, 5c, 5d, 5e, 5f$\}$ and $H\in\{$3a, 3b, 3c$\}$. \smallskip

\item All multigraphs of the form $G_1\mys H\mys G_2$ where $G_1\in\{$3a, 3b, 3c$\}$ and $H\in\{$2b, 2c$\}$. 
\end{itemize}

\begin{figure}
\begin{center}
\includegraphics[width=.55\textwidth]{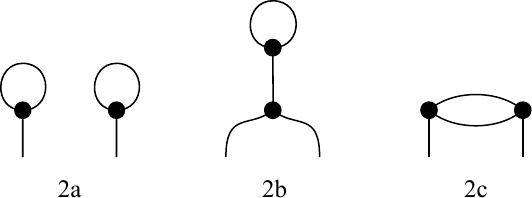}
\end{center}
\caption{Some cubic multigraphs on 2 vertices, with half-edges.}
\label{fig:pieces_2}
\end{figure}

\begin{figure}
\begin{center}
\includegraphics[width=.7\textwidth]{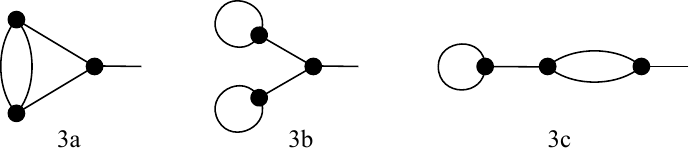}
\end{center}
\caption{Some cubic multigraphs on 3 vertices, with half-edges.}
\label{fig:pieces_3}
\end{figure}

\begin{figure}
\begin{center}
\includegraphics[width=.75\textwidth]{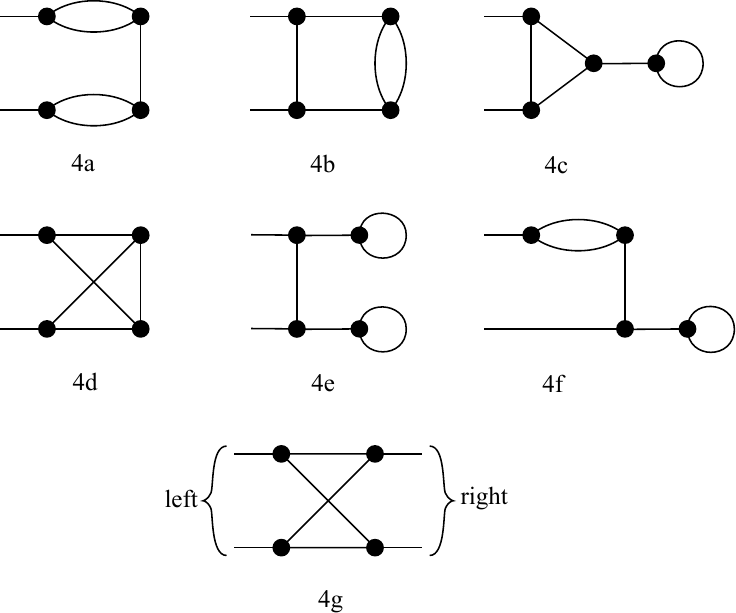}
\end{center}
\caption{Some cubic multigraphs on 4 vertices, with half-edges.}
\label{fig:pieces_4}
\end{figure}

\begin{figure}
\begin{center}
\includegraphics[width=.8\textwidth]{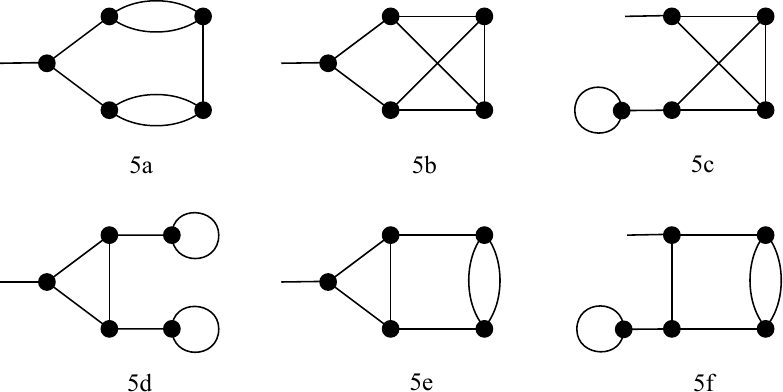}
\end{center}
\caption{Some cubic multigraphs on 5 vertices, with half-edges.}
\label{fig:pieces_5}
\end{figure}

\begin{figure}
\begin{center}
\includegraphics[width=.75\textwidth]{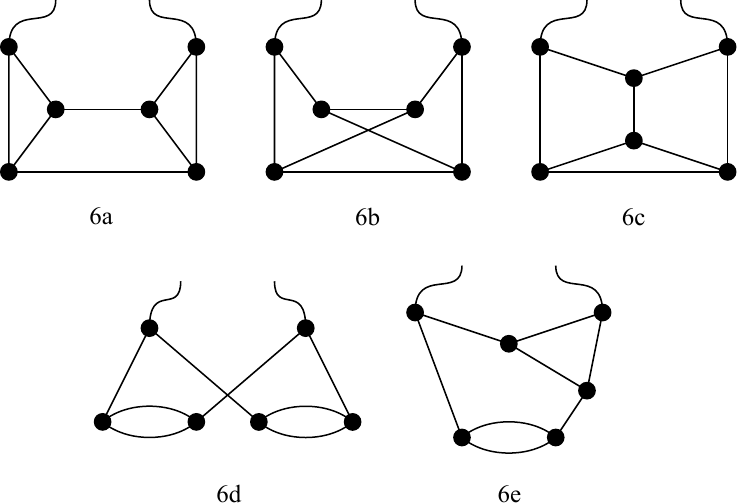}
\end{center}
\caption{Some cubic multigraphs on 6 vertices, with half-edges.}
\label{fig:pieces_6}
\end{figure}

To determine the disconnected cubic multigraphs on 8 vertices, we proceeded as follows. First, computer search found 24 connected cubic multigraphs on fewer than 8 vertices. Since there must be an even quantity of odd vertices, these are graphs on either 2, 4, or 6 vertices. In total, there are 2 such graphs on 2 vertices, 5 such graphs on 4 vertices, and 17 such graphs on 6 vertices; these are displayed in Figure \ref{fig:census_24} and Figure \ref{fig:census_6}. 

\begin{figure}
\begin{center}
\includegraphics[width=.9\textwidth]{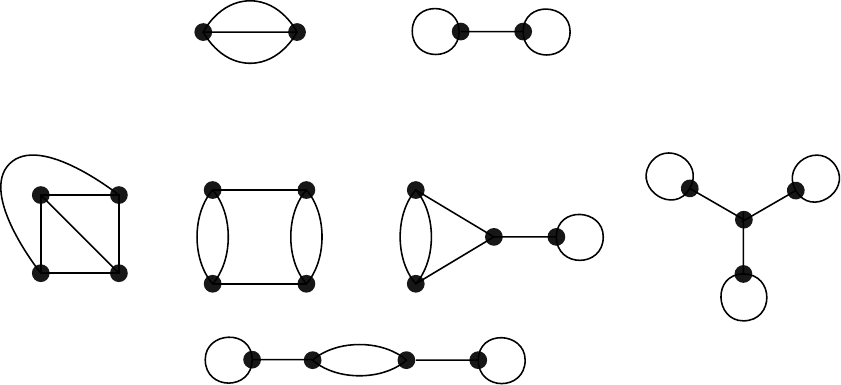}
\end{center}
\caption{The connected cubic multigraphs on  2 or 4 vertices.}
\label{fig:census_24}
\end{figure}

\begin{figure}
\begin{center}
\includegraphics[width=.9\textwidth]{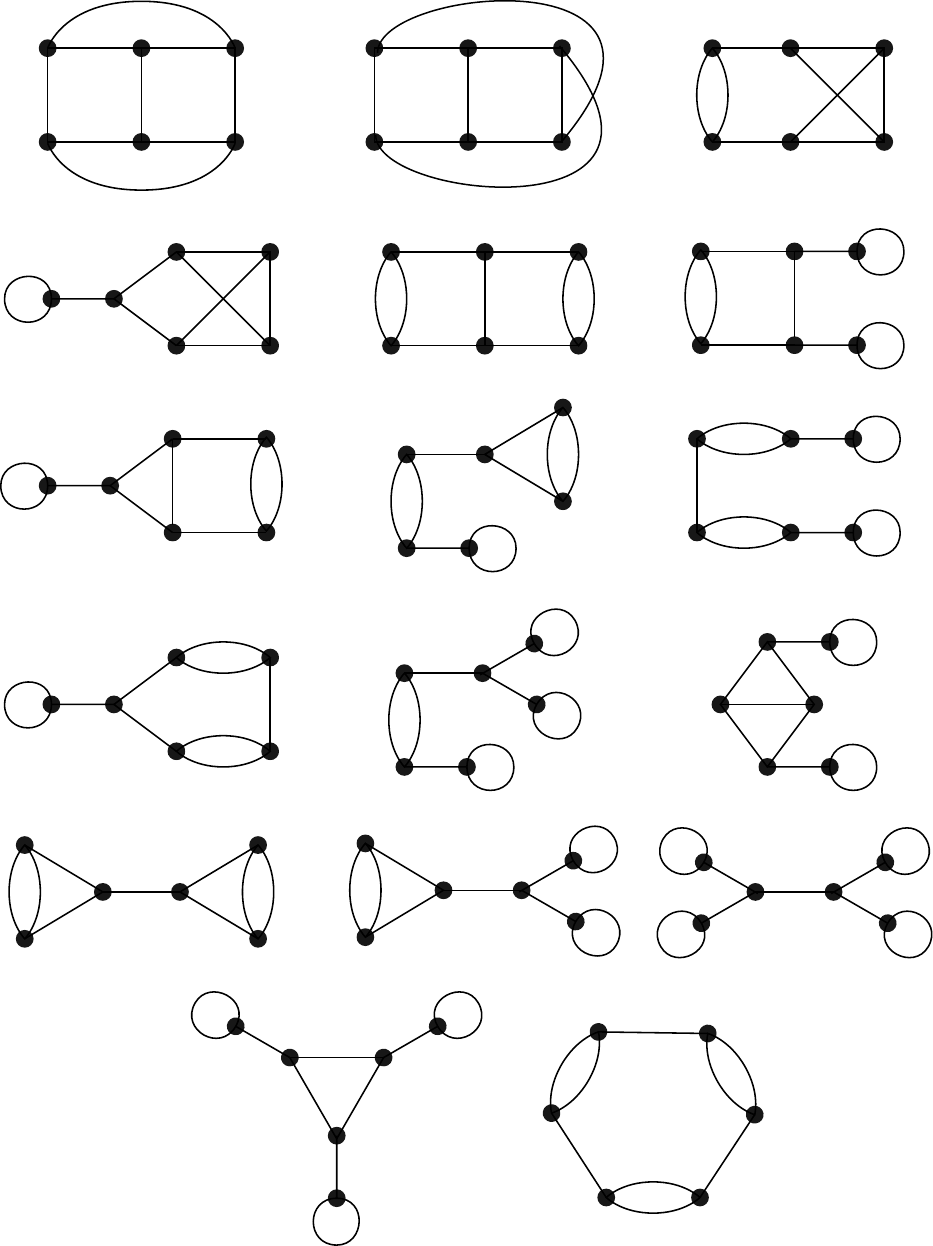}
\end{center}
\caption{The connected cubic multigraphs on 6 vertices.}
\label{fig:census_6}
\end{figure}

 Write $a$ and $b$ for the two connected cubic graphs on two vertices, write $F,F'$ for  arbitrary 4-vertex connected cubic graphs, and write $S$ for an arbitrary 6-vertex connected cubic graph. To enumerate the ways to build the disconnected cubic multigraphs on 8 vertices, we observe that the number 8 has 4 partitions, and consider the possible corresponding patterns.
\begin{itemize}
\item 8 = 2+2+2+2. This yields 5 possible patterns: 

$aaaa, aaab, aabb, abbb, bbbb$
\item 8 = 2+2+4; This yields 15 possible patterns: $aaF, abF, bbF$
\item 8 = 2+ 6; This yields 34 possible patterns: $aS, bS$
\item 8 = 4+4;  This yields 5 patterns: $FF$, and 10 patterns: $FF'$
\end{itemize}
We see that, overall, there are 69 disconnected cubic multigraphs on 8 vertices.

\subsection{Computation of quadrangulated immersions}\label{sec:computation}

The key takeaway from the computations we describe below is that we have found quandrangulated immersions of all but the three cubic graphs on 8 vertices shown in Figure \ref{fig:outlaws}. We conjecture that, in fact, none of these three graphs have any quadrangulated immersion at all.

\subsubsection{Exhaustive search}
After an earlier version of this article appeared on \texttt{arxiv.org}, Nico Van Cleemput significantly improved on the exhaustive search (by total number of vertices) we had carried out and described there.  Van Cleemput used plantri \cite{BrinkmannMckay07} combined with a plugin to generate quadrangulations having only cubic and quartic vertices, and then filtered those for the ones corresponding to immersions of cubic graphs \cite{VanCleemput}. For up to n=39 vertices, Van Cleemput ran the computations on an i5-4210U CPU running at 1.70GHz; finding all quadrangulated immersions with n=39 took 22.72 hours. For n=40,\ldots,46 the computation was run on a cluster of Intel Xeon Gold 6140 (Skylake) at 2.3 GHz. For n=46, 1082 distinct quadrangulations that are immersions of cubic graphs were found (corresponding to 90 distinct cubic graphs), which took about 1.15 CPU years.

\subsubsection{The two-disk construction}

Another computation was done using the two disks construction directly, which had the benefit of making it possible to consider graphs with hundreds of vertices. For example, presuming that all the irreducible immersions in Figure \ref{fig:disks} have been encoded, encoding an arbitrary immersion merely requires specifying an irreducible immersion and at most five additional natural numbers; see the example shown in Figure \ref{fig:encoded-disk-example}.
\begin{figure}
\begin{center}
\includegraphics[height=2in]{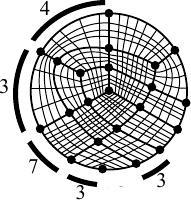}
\end{center}
\caption{A quadrangulated immersion of a disk encoded  by specifying the irreducible disk (in this case, the fourth disk from the left on the first row of Figure \ref{fig:disks}) and the five additional numbers 4, 3, 7, 3, 3, which specify the quantities and locations of the transversals. This example has 402 vertices. } 
\label{fig:encoded-disk-example}
\end{figure}

 To speed up the computation, the isomorphism check needed after each example was constructed was carried out as follows: Once a graph was determined to not be on the list of cubic graphs found so far, a hash was taken of each of the adjacency matrices for that graph over all permutations of its vertices, and saved for later use. Thus, checking whether a cubic graph had previously been found simply required comparing the hash of its one given adjaceny matrix to the list of known hashes.

This computation showed that quadrangulated immersions of 137 of the cubic multigraphs can be constructed using disks of circumference 18 or less. As before, the remaining three multigraphs are those shown in Figure \ref{fig:outlaws}. Further computation carried out on a cluster of Intel 8280 2.7 gHz processesors showed that these three examples cannot be achieved with the two disks construction using disks of circumference 48 or less. The computation for circumference 48  considered $5.44\times10^{10}$ cases and took about 1.97 CPU years.

\subsubsection{An additional computation}

An additional computation of interest, serving as a kind of test of some of the techniques described in this paper, used the {\tt NetworkX} library  to identify new quadrangulated immersions   from previously discovered ones by applying the radial method and reducing. Decomposing as per the two disks construction then provided new seeds for the two disks construction. This process was iterated 3 times, after which it no longer produced new quadrangulated immersions, yielding quadrangulated immersions of 133 cubic multigraphs. The 7 missing multigraphs were: the graph on the far right of Figure \ref{fig:census_3connected}, the graphs 5a\mys 3a, 5b\mys 3a, 5e\mys 3a (the last is shown in Figure \ref{fig:023}), and the graphs shown in Figure \ref{fig:outlaws}.

\begin{figure}
\begin{center}
\includegraphics[width=.97\textwidth]{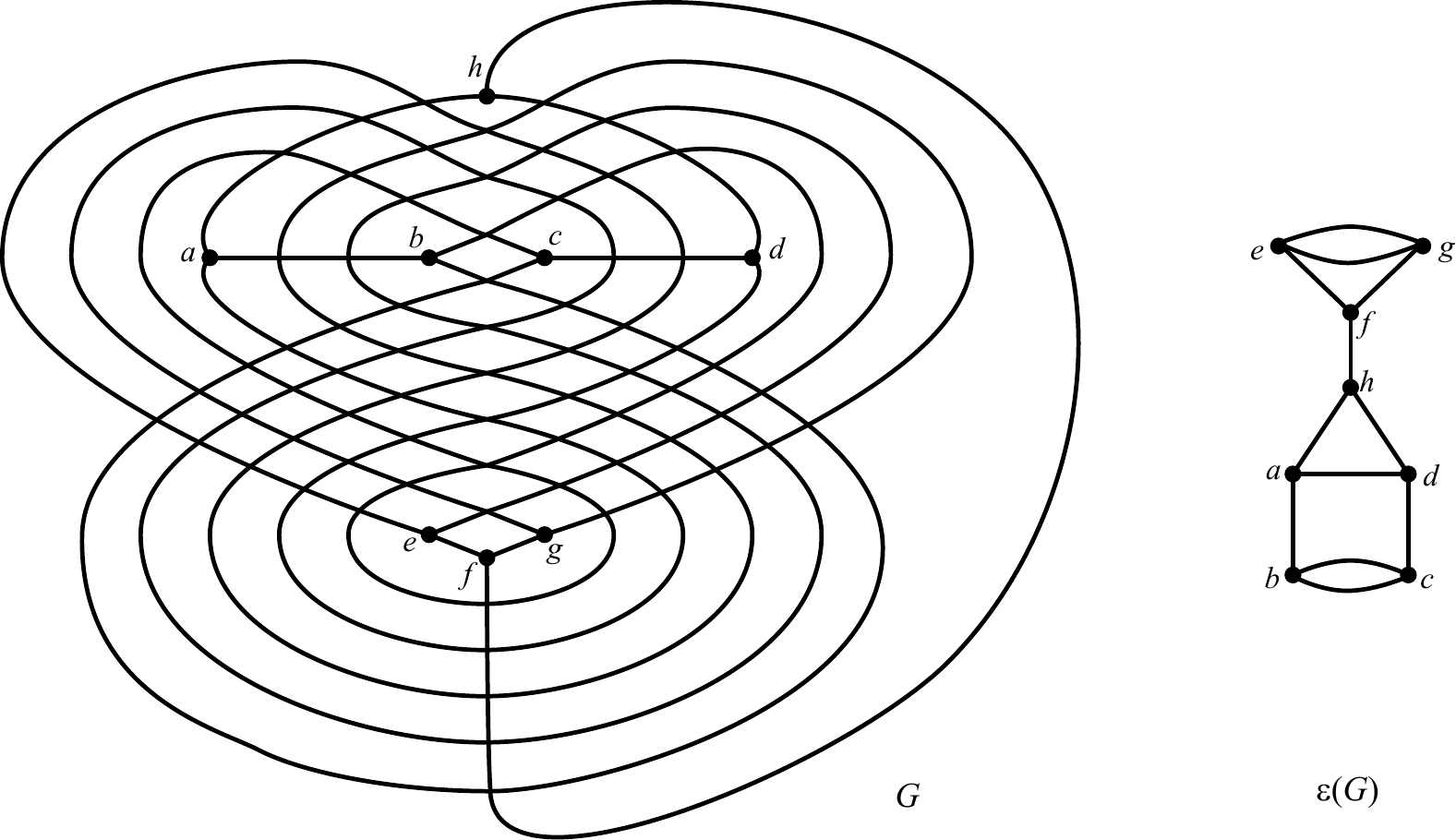}
\end{center}
\caption{A quadrangulated immersion, constructed by hand, which cannot be achieved with a two disks construction.}
\label{fig:023}
\end{figure}

\newpage
\providecommand{\bysame}{\leavevmode\hbox to3em{\hrulefill}\thinspace}
\providecommand{\MR}{\relax\ifhmode\unskip\space\fi MR }
\providecommand{\MRhref}[2]{%
  \href{http://www.ams.org/mathscinet-getitem?mr=#1}{#2}
}
\providecommand{\href}[2]{#2}

\bigskip


\end{document}